\DeclareMathOperator{\indicatorfMRS}{\mathsf{1}}
\DeclareMathOperator{\MEMRS}{\mathsf{E}}
\DeclareMathOperator{\varMRS}{var}
\DeclareMathOperator{\covMRS}{cov}
\DeclareMathOperator{\pdfMRS}{pdf}
\DeclareMathOperator{\constoneMRS}{\indicatorMRS_{[0,T]}}
\newcommand{\BetaMRS}{\mathrm B}
\newcommand{\indicatorMRS}{\indicatorfMRS\nolimits}
\newcommand{\NormalMRS}{\mathcal N}
\newcommand*{\normMRS}[1]{\left\lVert#1\right\rVert}
\newcommand*{\absMRS}[1]{\left\lvert#1\right\rvert}
\newcommand*{\setMRS}[1]{\left\{#1\right\}}
\begin{document}

\title*{Parameter estimation for Gaussian processes with application to the model with two independent fractional Brownian motions}
\titlerunning{Parameter estimation for Gaussian processes}
% Use \titlerunning{Short Title} for an abbreviated version of
% your contribution title if the original one is too long
\author{Yuliya Mishura, Kostiantyn Ralchenko and Sergiy Shklyar}
% Use \authorrunning{Short Title} for an abbreviated version of
% your contribution title if the original one is too long
%\institute{Name of First Author \at Name, Address of Institute, \email{name@email.address}
%\and Name of Second Author \at Name, Address of Institute \email{name@email.address}}
\institute{Yuliya Mishura \at
Department of Probability Theory, Statistics and Actuarial Mathematics,
  Taras Shevchenko National University of Kyiv,
  64 Volodymyrska,
  01601 Kyiv, Ukraine
\email{myus@univ.kiev.ua}
\and
Kostiantyn Ralchenko \at
Department of Probability Theory, Statistics and Actuarial Mathematics,
  Taras Shevchenko National University of Kyiv,
  64 Volodymyrska,
  01601 Kyiv, Ukraine
\email{k.ralchenko@gmail.com}
\and
Sergiy Shklyar \at
Department of Probability Theory, Statistics and Actuarial Mathematics,
  Taras Shevchenko National University of Kyiv,
  64 Volodymyrska,
  01601 Kyiv, Ukraine
\email{shklyar@univ.kiev.ua}
}
%
% Use the package "url.sty" to avoid
% problems with special characters
% used in your e-mail or web address
%
\maketitle

\abstract*{The purpose of the article is twofold.
Firstly, we review some recent results on the maximum likelihood estimation in the regression model of the form $X_t = \theta G(t) + B_t$, where $B$ is a  Gaussian process, $G(t)$ is a known function, and $\theta$ is an unknown drift parameter. The estimation techniques for the cases of discrete-time and continuous-time observations are presented. As examples, models with fractional Brownian motion, mixed fractional Brownian motion, and sub-fractional Brownian motion are considered.
Secondly, we study in detail the model with two independent fractional Brownian motions and apply the general results mentioned above to this model.}

\abstract{The purpose of the article is twofold.
Firstly, we review some recent results on the maximum likelihood estimation in the regression model of the form $X_t = \theta G(t) + B_t$, where $B$ is a  Gaussian process, $G(t)$ is a known function, and $\theta$ is an unknown drift parameter. The estimation techniques for the cases of discrete-time and continuous-time observations are presented. As examples, models with fractional Brownian motion, mixed fractional Brownian motion, and sub-fractional Brownian motion are considered.
Secondly, we study in detail the model with two independent fractional Brownian motions and apply the general results mentioned above to this model.
\keywords{discrete observations, continuous observations, maximum likelihood estimator, strong consistency, fractional Brownian motion, Fredholm integral equation of the first kind}}

\section{Introduction}\label{MRS:sec:1}\label{MRS:sec:problem}

Gaussian processes with drift arise in many applied areas, in particular, in telecommunication and on financial markets.
An observed process often can be decomposed as the sum of a useful signal and a random noise, where the last one mentioned is usually modeled by a centered Gaussian process, see, e.\,g., \cite[Ch.~VII]{MRS:IbrRoz}.

The simplest example of such model is the process
\[
Y_t=\theta t+W_t,
\]
where $W$ is a Wiener process.
In this case the MLE of the drift parameter $\theta$ by observations of $Y$ at points $0=t_0\le t_1\le\ldots\le t_{N}=T$ is given by
\[
\hat \theta=\frac{1}{t_{N}-t_0}\sum_{i=0}^{N-1}
\left(Y_{t_{i+1}}-Y_{t_i}\right)
=\frac{Y_{T}-Y_{0}}{T},
\]
and depends on the observations at two points, see e.\,g.\ \cite{MRS:BTT}.
Models of such type are widely used in finance.
For example, Samuelson's model \cite{MRS:samuelson1965rational} with constant drift parameter $\mu$ and known volatility $\sigma$ has the form
\[
\log S_t=\left(\mu-\frac{\sigma^2}{2}\right)t+\sigma W_t,
\]
and the MLE of $\mu$ equals
\[
\hat\mu=\frac{\log S_T-\log S_0}{T}+\frac{\sigma^2}{2}.
\]

At the same time, the model with Wiener process is not suitable for many processes in natural sciences, computer networks, financial markets, etc., that have long- or short-term dependencies, i.\,e., the correlations of random noise in these processes decrease slowly with time (long-term dependence) or rapidly with time (short-term dependence).
In particular, the models of financial markets demonstrate  various kinds of memory (short or long).
However, a Wiener process has independent increments, and, therefore, the random noise generated by it is ``white'', i.\,e., uncorrelated.  The most simple way to overcome this limitation is to use fractional Brownian motion.
In some cases even more complicated models are needed.
For example, the noise can be modeled by mixed fractional Brownian motion \cite{MRS:cheredito}, or by the sum of two fractional Brownian motions \cite{MRS:MiSumfbms1}.
Moreover, recently Gaussian processes with non-stationary increments have become popular such as sub-fractional \cite{MRS:BGT}, bifractional \cite{MRS:HV} and multifractional \cite{MRS:benassi,MRS:peltier1995,MRS:ralshev} Brownian motions.

In this paper we study rather general model where the noise is represented by a centered Gaussian process $B=\{B_t, t\ge 0\}$ with known covariance function, $B_0 = 0$. We assume that all finite-dimensional distributions of the process $\{B_t, \; t>0\}$ are multivariate normal distributions
with nonsingular covariance matrices.
We observe the process $X_t$ with a drift $\theta G(t)$,
that is,
\begin{equation}\label{MRS:eq:model}
X_t = \theta G(t) + B_t,
%\\
%d X_t = \theta g(t) \, dt + d B_t,
\end{equation}
where   $G(t) = \int_0^t g(s)\, ds, $ and $g\in L_1[0,t]$ for any $t>0$.

%(equivalently, values of the process
%$\{B_t, \; t>0\}$
%are linearly independent elements of the space
%$L_2(\Omega, \mathcal{F}, \mathsf{P})$).

The paper is devoted to the estimation of the parameter $\theta$ by observations of the process $X$.
We consider the MLEs for discrete and continuous schemes of observations.
The results presented are based on the recent papers \cite{MRS:NA,MRS:AJS}.
Note that in \cite{MRS:AJS} the model~\eqref{MRS:eq:model} with $G(t)=t$ was considered, and the driving process $B$ was a process with stationary increments. Then in \cite{MRS:NA} these results were extended to the case of non-linear drift and more general class of driving processes.
In the present paper we apply the theoretical results mentioned above to the models with fractional Brownian motion, mixed fractional Brownian motion and sub-fractional Brownian motion.

Similar problems for the model with linear drift driven by fractional Brownian motion were studied in \cite{MRS:BTT,MRS:HNXZ11,MRS:Breton98,MRS:nvv}.
The mixed Brownian\,---\,fractional Brownian model was treated in~\cite{MRS:CaiChigKlept}.
In \cite{MRS:BerWol,MRS:Privault} the nonparametric functional estimation of the drift of a Gaussian processes was considered (such estimators for fractional and subfractional Brownian motions were studied in \cite{MRS:EOO} and \cite{MRS:ShenYan} respectively).

In the present paper special attention is given to the model of the form\linebreak
$X_t=\theta t + B^{H_1}_t+ B^{H_2}_t$
with two independent fractional Brownian motions $B^{H_1}$ and $B^{H_2}$.
%Due to the long-range dependence property of fractional Brownian motion with $H > 1/2$, this model is called the model with double long-range dependence \cite{MRS:MishuraVoronov}.
This model was first studied in \cite{MRS:MiSumfbms1}, where a strongly consistent estimator for the unknown drift parameter $\theta$ was constructed for $1/2<H_1<H_2<1$ and $H_2-H_1>1/4$ by continuous-time observations of $X$. Later, in \cite{MRS:MishuraVoronov}, the strong consistency of this estimator was proved for arbitrary $1/2<H_1<H_2<1$.
The details on this approach are given in Remark~\ref{MRS:rem:mish-vor} below.
However, the problem of drift parameter estimation by discrete observations in this model was still open.
Applying our technique, we obtain the discrete-time estimator of
$\theta$ and prove its strong consistency for any $H_1, H_2\in(0,1)$. Moreover, we also construct the continuous-time estimator and prove the convergence of the discrete-time estimator to the continuous-time one in the case where $H_1\in(1/2,3/4]$ and $H_2\in(H_1,1)$.

It is worth mentioning that the drift parameter estimation is developed for more general models involving fBm.
%namely for fractional and mixed diffusion models.
In particular, the fractional Ornstein--Uhlenbeck process is a popular and well-studied model with fBm. The MLE of the drift parameter for this process
was constructed in \cite{MRS:KleptsynaLeBreton} and further investigated in \cite{MRS:Bercu2010,MRS:Tanaka13,MRS:TudorViens}.
Several non-standard estimators for the drift parameter of an  ergodic fractional Ornstein--Uhlenbeck process were proposed in \cite{MRS:HuNu} and studied in \cite{MRS:HuNuZhou}. The corresponding non-ergodic case was treated in \cite{MRS:BESO,MRS:EMESO,MRS:Tanaka15}.
In the papers \cite{MRS:CES,MRS:Es-Sebaiy,MRS:ESN,MRS:HuSong13,MRS:xiaoZX,MRS:xiaoZZ} drift parameter estimators were constructed via discrete observations.
More general fractional diffusion models were studied in \cite{MRS:KMM,MRS:mishura} for continuous-time estimators and in \cite{MRS:kumirase,MRS:mira,MRS:mirasesh} for the case of discrete observations.
An estimator of the volatility parameter was constructed in \cite{MRS:kumiva}. For Hurst index estimators see, e.\,g., \cite{MRS:kubmish} and references cited therein.
Mixed diffusion model including fractional Brownian motion and Wiener process was investigated in \cite{MRS:KMM}.
We refer to the paper \cite{MRS:Mishura2017} for a survey of the results on parameter estimation in fractional and mixed diffusion models and to the books \cite{MRS:KMR-book,MRS:prakasa_rao2010} for a comprehensive study of this topic.

The paper is organized as follows.
In Section~\ref{MRS:sec:discr} we construct the MLE by discrete-time observations and formulate the conditions for its strong consistency.
In Section~\ref{MRS:sec:cont} we consider the estimator constructed by continuous-time observations and the relations between discrete-time and continuous-time estimators.
In Section~\ref{MRS:sec:examples} these results are applied to various models mentioned above.
In particular, the new approach to parameter estimation in the model with two independent fractional Brownian motions is presented in Subsection \ref{MRS:sscec:2fBm}.
Auxiliary results are proved in the appendices.

\section{Construction of drift parameter estimator for discrete-time observations}
\label{MRS:sec:discr}

Let the process $X$ be observed at the points  $0<t_1<t_2<\ldots<t_N$.
Then the vector of increments
\[
\Delta X^{(N)} = (X_{t_1}, \: X_{t_2}-X_{t_1}, \:
\ldots, \: X_{t_N}-X_{t_{N-1}})^\top
\]
is a one-to-one function of the observations.
We assume in this section that the inequality $G(t_k) \neq 0$ holds at least for one $k$.

Evidently,  vector $\Delta X^{(N)}$ has Gaussian distribution
$\NormalMRS(\theta \Delta G^{(N)}, \Gamma^{(N)})$, where
\[
\Delta G^{(N)} = \bigl(G(t_1), \: G(t_2)-G(t_1), \:
\ldots, \: G(t_N) - G(t_{N-1})\bigr)^\top.
\]
Let $\Gamma^{(N)}$ be the covariance matrix of the vector
\[
\Delta B^{(N)} = (B_{t_1}, \: B_{t_2}-B_{t_1}, \:
\ldots, \: B_{t_N}-B_{t_{N-1}})^\top.
\]
The density of the distribution of $\Delta X^{(N)}$
w.\,r.\,t. the Lebesgue measure is
\[\textstyle
\pdfMRS_{\Delta X^{(N)}}(x)
%\\*
=
\frac{(2 \pi)^{-N/2}}{\sqrt{\det \Gamma^{(N)}}}
\exp\left\{ - \frac{1}{2}
\left(x - \theta \Delta G^{(N)}\right)^\top
\left(\Gamma^{(N)}\right)^{-1}
\left(x - \theta \Delta G^{(N)}\right)
\right\}.
\]
Then one can take the density of the distribution of the vector  $\Delta X^{(N)}$ for a given $\theta$
w.\,r.\,t. the density for $\theta=0$ as a likelihood function:
\begin{equation}\label{MRS:eq:LN}
L^{(N)}(\theta) =
\exp\left\{
\theta (\Delta G^{(N)})^\top (\Gamma^{(N)})^{-1} \Delta X^{(N)} -
\frac{\theta^2}{2}
(\Delta G^{(N)})^\top (\Gamma^{(N)})^{-1} \Delta G^{(N)}
\right\}.
\end{equation}
The corresponding MLE equals
\begin{equation}\label{MRS:eq:discr-mle}
\hat\theta^{(N)} = \frac {\left(\Delta G^{(N)}\right)^\top
\left(\Gamma^{(N)}\right )^{-1} \Delta X^{(N)}} {\left(\Delta
G^{(N)}\right)^\top \left(\Gamma^{(N)}\right)^{-1} \Delta
G^{(N)}}.
\end{equation}

\begin{theorem}[Properties of the discrete-time MLE \cite{MRS:NA}]
\label{MRS:thm:consdiscrL2}
1.
The estimator $\hat\theta^{(N)}$ is unbiased and normally distributed:
\[
\hat\theta^{(N)} - \theta \simeq \NormalMRS\left(0,\frac{1}
{(\Delta G^{(N)})^\top (\Gamma^{(N)})^{-1} \Delta G^{(N)}}\right).
\]
2.
Assume that
\begin{equation}\label{MRS:eq:condconssi}
\frac{\varMRS B_{t}}{G^2(t)} \to 0, \quad\text{as } t\to\infty.
\end{equation}
If $t_{N} \to \infty$, as $N\to\infty$,
then the discrete-time MLE
 $\hat\theta^{(N)}$ converges to $\theta$ as $N\to\infty$ almost surely and in $L_2(\Omega)$.
\end{theorem}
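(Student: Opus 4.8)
The plan is to handle the two parts separately, with Part~2 being the substantive one. For Part~1, the estimator $\hat\theta^{(N)}$ in~\eqref{MRS:eq:discr-mle} is an explicit linear functional of the Gaussian vector $\Delta X^{(N)}\simeq\NormalMRS(\theta\,\Delta G^{(N)},\Gamma^{(N)})$, so I would simply compute. Writing $a = (\Gamma^{(N)})^{-1}\Delta G^{(N)}$ and $D_N = (\Delta G^{(N)})^\top (\Gamma^{(N)})^{-1}\Delta G^{(N)} = a^\top\Delta G^{(N)}$, one has $\hat\theta^{(N)} = a^\top\Delta X^{(N)}/D_N$. Taking expectations gives $\MEMRS\hat\theta^{(N)} = a^\top(\theta\,\Delta G^{(N)})/D_N = \theta$, which is unbiasedness. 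For the variance, $\varMRSoff(\hat\theta^{(N)}) = a^\top\Gamma^{(N)} a/D_N^2 = (\Delta G^{(N)})^\top(\Gamma^{(N)})^{-1}\Gamma^{(N)}(\Gamma^{(N)})^{-1}\Delta G^{(N)}/D_N^2 = D_N/D_N^2 = 1/D_N$. Here I use that $\Gamma^{(N)}$ is nonsingular, which is guaranteed by the standing assumption that all finite-dimensional distributions of $\{B_t,\,t>0\}$ have nonsingular covariance matrices. Since a linear functional of a Gaussian vector is Gaussian, $\hat\theta^{(N)}-\theta\simeq\NormalMRS(0,1/D_N)$, which is exactly the claim.

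For Part~2, the strategy is to reduce almost sure and $L_2$ convergence to showing that the variance $1/D_N\to 0$, and then to bound $D_N$ from below by something that diverges. By Part~1, $\MEMRS(\hat\theta^{(N)}-\theta)^2 = 1/D_N$, so $L_2$ convergence is equivalent to $D_N\to\infty$. The key linear-algebra observation is a lower bound for the quadratic form $D_N = (\Delta G^{(N)})^\top(\Gamma^{(N)})^{-1}\Delta G^{(N)}$. One natural route: for any vector $v\in\mathbb R^N$ with $v^\top\mathbf 1$-type pairing, by the Cauchy--Schwarz inequality applied in the inner product induced by $(\Gamma^{(N)})^{-1}$, for any deterministic $c\in\mathbb R^N$,
\[
D_N = (\Delta G^{(N)})^\top(\Gamma^{(N)})^{-1}\Delta G^{(N)} \ge \frac{\bigl(c^\top\Delta G^{(N)}\bigr)^2}{c^\top\Gamma^{(N)} c}.
\]
Choosing $c = (0,\ldots,0,1)^\top$ picks out the last coordinate: $c^\top\Delta G^{(N)} = G(t_N) - G(t_{N-1})$, which is awkward. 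Instead I would choose $c = (1,1,\ldots,1)^\top$, so that $c^\top\Delta G^{(N)} = G(t_N)$ (the increments telescope) and $c^\top\Gamma^{(N)} c = \varMRSoff\bigl(\sum_i \Delta B^{(N)}_i\bigr) = \varMRSoff(B_{t_N}) = \varMRS B_{t_N}$. This yields
\[
D_N \ge \frac{G^2(t_N)}{\varMRS B_{t_N}},
\]
and hence $\varMRSoff(\hat\theta^{(N)}) = 1/D_N \le \varMRS B_{t_N}/G^2(t_N)$. By hypothesis~\eqref{MRS:eq:condconssi} together with $t_N\to\infty$, the right-hand side tends to $0$, giving $L_2$ convergence immediately.

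For almost sure convergence I would argue along the same Gaussian line: $\hat\theta^{(N)} - \theta$ is a centered Gaussian sequence with variances $\sigma_N^2 := 1/D_N\to 0$. A clean way to upgrade this to a.s. convergence is to note that $D_N$ is nondecreasing in $N$ (adding an observation can only increase Fisher information — concretely, $D_{N+1}\ge D_N$ since $\Delta X^{(N)}$ is a measurable function of $\Delta X^{(N+1)}$, or by a direct matrix argument), so $\sigma_N^2$ is nonincreasing; then one can extract a subsequence $N_k$ along which $\sum_k\sigma_{N_k}^2<\infty$, apply Borel--Cantelli (using Gaussian tail bounds) to get a.s. convergence along $N_k$, and fill the gaps using monotonicity of $\sigma_N^2$ and a maximal-type estimate, or alternatively invoke that $\hat\theta^{(N)}-\theta = a_N^\top\Delta X^{(N)}/D_N$ can be realized as a reverse martingale / projection sequence so that the full sequence converges. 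I expect the main obstacle to be precisely this last point — making the a.s. statement rigorous, since $L_2$ convergence of a Gaussian sequence does not by itself give a.s. convergence; the monotonicity of the Fisher information $D_N$ is the structural fact that saves the argument, and I would make sure to establish $D_{N+1}\ge D_N$ carefully (via the information-monotonicity interpretation or the Schur-complement identity for $(\Gamma^{(N+1)})^{-1}$).
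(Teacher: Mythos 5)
Your Part~1 and the $L_2$-part of Part~2 are correct and are essentially the standard argument (the paper itself does not reprove this theorem; it is quoted from \cite{MRS:NA}). The Cauchy--Schwarz bound in the $(\Gamma^{(N)})^{-1}$-inner product with $c=(1,\dots,1)^\top$, giving $1/D_N\le \varMRS B_{t_N}/G^2(t_N)$, is exactly the right reduction and immediately yields mean-square consistency under \eqref{MRS:eq:condconssi}.

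The genuine gap is the almost sure convergence. The route you describe first --- monotone variances $\sigma_N^2=1/D_N\downarrow 0$, a subsequence with $\sum_k\sigma_{N_k}^2<\infty$, Borel--Cantelli, then ``fill the gaps using monotonicity'' --- does not work as stated: monotone decay of individual variances of a Gaussian sequence to $0$ does not imply a.s.\ convergence (independent $\NormalMRS(0,1/\log N)$ variables are a counterexample), and without a maximal inequality there is nothing to control the terms between $N_k$ and $N_{k+1}$. The maximal inequality has to come from the dependence structure, which is precisely the point you leave unproved. The missing structural fact is that $\xi_N:=D_N\bigl(\hat\theta^{(N)}-\theta\bigr)=(\Delta G^{(N)})^\top(\Gamma^{(N)})^{-1}\Delta B^{(N)}$ is a \emph{forward} martingale with respect to $\mathcal F_N=\sigma(X_{t_1},\dots,X_{t_N})$ with $\MEMRS\xi_N^2=D_N$. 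This is a short Gaussian computation: writing $J$ for the $M\times N$ matrix $(I_N,0)^\top$, one has $\Delta B^{(N)}=J^\top\Delta B^{(M)}$, hence $\covMRS\bigl(\xi_M,\Delta B^{(N)}\bigr)=(\Delta G^{(M)})^\top(\Gamma^{(M)})^{-1}\Gamma^{(M)}J=(\Delta G^{(N)})^\top$, and therefore
\[
\MEMRS\bigl[\xi_M\mid\mathcal F_N\bigr]=(\Delta G^{(N)})^\top(\Gamma^{(N)})^{-1}\Delta B^{(N)}=\xi_N ,\qquad M\ge N .
\]
Given this, a.s.\ convergence follows from the martingale strong law: $\sum_N(\xi_N-\xi_{N-1})/D_N$ is a martingale whose quadratic variation satisfies $\sum_N(D_N-D_{N-1})/D_N^2\le\sum_N\bigl(1/D_{N-1}-1/D_N\bigr)<\infty$, so it converges a.s., and Kronecker's lemma gives $\xi_N/D_N=\hat\theta^{(N)}-\theta\to0$ a.s.\ whenever $D_N\to\infty$ (which your Cauchy--Schwarz bound supplies). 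So the correct skeleton is: martingale property $\Rightarrow$ maximal/Kronecker machinery $\Rightarrow$ a.s.\ convergence; the monotonicity of $D_N$ alone is not the saving fact. I would also record explicitly that $D_N>0$ for large $N$ requires $G(t_N)\neq0$, which follows from \eqref{MRS:eq:condconssi} and $t_N\to\infty$ together with the standing nondegeneracy assumption on $B$.
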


\section{Construction of drift parameter estimator for continuous-time observations}
\label{MRS:sec:cont}

In this section we suppose that the process $X_t$ is observed on the whole interval $[0,T]$. We investigate MLE for the parameter $\theta$ based on these observations.

Let $\langle f,\, g\rangle = \int_0^T f(t) g(t) \, dt$.
Assume that the function $G$ and the process $B$ satisfy the following conditions.
\begin{enumerate}[\bf(A)]
\item There exists a linear self-adjoint operator
$\Gamma=\Gamma_T : L_2[0,T] \to L_2[0,T]$
such that
\begin{equation}\label{MRS:eq:ass10st}
\covMRS(X_s, X_t) = \MEMRS B_s B_t =
\int_0^t \Gamma_T \indicatorMRS_{[0,s]}(u) \, du =
\langle \Gamma_T \indicatorMRS_{[0,s]}, \, \indicatorMRS_{[0,t]} \rangle.
\end{equation}
\item The drift function $G$ is not identically zero, and in its representation $G(t) = \int_0^t g(s)\, ds$  the function  $g \in L_2[0,T]$.
\item
There exists a function $h_T \in L_2[0,T]$ such that
$g = \Gamma h_T$.
\end{enumerate}
Note that under assumption $(A)$ the covariance between integrals of deterministic functions
$f \in L_2[0,T]$ and $g \in L_2[0,T]$
w.\,r.\,t. the process $B $ equals
\[
\MEMRS \int_0^T f(s)\, dB_s \, \int_0^T g(t)\, dB_t =
\langle \Gamma_T f,\,   g\rangle .
\]

%0
\begin{theorem}[Likelihood function and continuous-time MLE \cite{MRS:NA}]
\label{MRS:th-L}
%If all finite-dimensional distributions of the process $\{B_t,\allowbreak \; t\in(0,T]\}$,
%are nonsingular and
%Assumptions~\ref{MRS:assump-B} and \ref{MRS:assump-hexists}
%hold, then
Let $T$ be fixed,
  assumptions $(A)$--$(C)$ hold.
Then one can choose
\begin{equation}\label{MRS:eq-L}
L(\theta) = \exp\left\{
\theta \int_0^T h_T(s) \, dX_s - \frac{\theta^2}{2}
\int_0^T g(s) h_T(s) \, ds\right\}
\end{equation}
as a likelihood function.
The MLE equals
\begin{equation}\label{MRS:eq:defthetaC}
\hat\theta_T = \frac
{\int_0^T h_T(s) \, dX_s}
{\int_0^T g(s) h_T(s) \, ds} .
\end{equation}
It is unbiased and normally distributed:
\[
\hat\theta_T-\theta\simeq\NormalMRS\left(0,\frac{1}
{\int_0^T g(s) h_T(s) \, ds} \right).
\]
\end{theorem}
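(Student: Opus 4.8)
The plan is to identify the function $L(\theta)$ of \eqref{MRS:eq-L} with the Radon--Nikodym density $dP_\theta/dP_0$, where $P_\theta$ is the law of the observed path $X=\theta G+B$ on the $\sigma$-field $\mathcal F_T$ generated by $\{X_t,\ t\in[0,T]\}$ and $P_0$ is the law of $B$; once this is done, the likelihood interpretation is immediate and the remaining assertions are elementary.

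First I would give meaning to the ingredients of \eqref{MRS:eq-L}. Since $h_T\in L_2[0,T]$ by $(C)$, the Wiener-type integral $I(h_T):=\int_0^T h_T(s)\,dB_s$ is defined as the $L_2(\Omega)$-limit of integrals of step functions, and by the covariance identity recorded just after assumption $(A)$ it is centered Gaussian with $\varMRS I(h_T)=\langle\Gamma_T h_T,\,h_T\rangle=\langle g,\,h_T\rangle=\int_0^T g(s)h_T(s)\,ds=:\sigma_T^2$; this quantity is finite (Cauchy--Schwarz, $(B)$, $(C)$) and strictly positive, for $\langle\Gamma_T h_T,h_T\rangle=0$ would force $\Gamma_T h_T=g=0$, hence $G\equiv0$, contradicting $(B)$. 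Denoting by $J$ the path functional $\int_0^T h_T(s)\,dX_s$ — which equals $I(h_T)$ when $X=B$ and $I(h_T)+\theta\sigma_T^2$ when $X=\theta G+B$ — one rewrites \eqref{MRS:eq-L} as $L(\theta)=\exp\{\theta J-\tfrac{\theta^2}{2}\sigma_T^2\}$, so that under $P_0$ we have $L(\theta)=\exp\{\theta I(h_T)-\tfrac{\theta^2}{2}\sigma_T^2\}$, the exponential (Esscher) tilt attached to the Gaussian variable $I(h_T)$; in particular $\MEMRS_{P_0}L(\theta)=1$.

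The core step is the change of measure, for which the key computation is that for every $t\in[0,T]$
\[
\covMRS\bigl(I(h_T),\,B_t\bigr)=\langle\Gamma_T h_T,\,\indicatorMRS_{[0,t]}\rangle=\langle g,\,\indicatorMRS_{[0,t]}\rangle=\int_0^t g(s)\,ds=G(t),
\]
using $(A)$ and then $(C)$. Hence, for any $0\le t_1<\dots<t_n\le T$, the jointly Gaussian vector $(I(h_T),B_{t_1},\dots,B_{t_n})$ has, under the tilted measure $L(\theta)\,dP_0$, the same covariance as under $P_0$ but with the mean of $B_{t_j}$ shifted by $\theta\,\covMRS(I(h_T),B_{t_j})=\theta G(t_j)$; that is, the $L(\theta)\,dP_0$-law of $(B_{t_1},\dots,B_{t_n})$ coincides with the $P_\theta$-law of $(X_{t_1},\dots,X_{t_n})$. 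Since cylinder sets generate $\mathcal F_T$, this yields $dP_\theta/dP_0=L(\theta)$, so $L(\theta)$ is a legitimate likelihood and $P_\theta\ll P_0$. (An equivalent route, matching Theorem~\ref{MRS:thm:consdiscrL2}, is to take a refining sequence of partitions, observe that \eqref{MRS:eq:LN} equals $\exp\{\theta\int_0^T h_T^{(N)}\,dX_s-\tfrac{\theta^2}{2}\int_0^T g h_T^{(N)}\,ds\}$ with $h_T^{(N)}$ the step function whose coefficient vector is $(\Gamma^{(N)})^{-1}\Delta G^{(N)}$, and to pass to the limit using $h_T^{(N)}\to h_T$ in $L_2[0,T]$.)

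With $L(\theta)=dP_\theta/dP_0$ established, the MLE maximizes $\log L(\theta)=\theta\int_0^T h_T\,dX_s-\tfrac{\theta^2}{2}\int_0^T g h_T\,ds$, a strictly concave quadratic in $\theta$ because its leading coefficient is $-\tfrac12\sigma_T^2<0$; equating the derivative to zero gives \eqref{MRS:eq:defthetaC}. Finally, under $P_\theta$ one has $\int_0^T h_T\,dX_s=I(h_T)+\theta\sigma_T^2$ with $I(h_T)\simeq\NormalMRS(0,\sigma_T^2)$, whence $\hat\theta_T=\theta+\sigma_T^{-2}I(h_T)\simeq\NormalMRS(\theta,\sigma_T^{-2})$, which is the claimed unbiasedness and normality. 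I expect the main obstacle to be the change-of-measure step: constructing $I(h_T)$ rigorously for a general Gaussian $B$ given only its covariance (which requires appropriate control of $\Gamma_T$, or of its finite-dimensional compressions), verifying the identity $\covMRS(I(h_T),B_t)=G(t)$ — precisely the role of assumption $(C)$, $g=\Gamma_T h_T$ — and checking that agreement of all finite-dimensional distributions suffices to identify the two path laws. The optimization and the computation of the law of $\hat\theta_T$ are then routine.
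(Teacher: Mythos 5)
Your argument is correct. Note that the paper itself gives no proof of Theorem~\ref{MRS:th-L}; it is quoted from \cite{MRS:NA}, where the density is likewise obtained from the general theory of equivalence of Gaussian measures, so your Cameron--Martin / exponential-tilting route is essentially the intended one rather than a genuinely different proof. The two points worth making explicit are exactly the ones you flag: (i) the Wiener-type integral $I(h_T)$ extends from step functions to all of $L_2[0,T]$ because $\Gamma_T$ is a bounded positive semi-definite operator (positivity follows from $\langle \Gamma_T f, f\rangle = \MEMRS\bigl(\int_0^T f\,dB\bigr)^2 \ge 0$, and this same positivity is what justifies your inference that $\langle\Gamma_T h_T,h_T\rangle=0$ would force $\Gamma_T h_T=g=0$, so that $\sigma_T^2>0$); and (ii) the identification of the tilted measure with $P_\theta$ via finite-dimensional distributions is legitimate since cylinder sets form a $\pi$-system generating $\mathcal F_T$ and $L(\theta)$ is $\mathcal F_T$-measurable as an $L_2$-limit of cylinder functionals. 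With these observations the optimization of the quadratic log-likelihood and the computation of the law of $\hat\theta_T$ are routine, as you say.
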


\begin{theorem}[Consistency of the continuous-time MLE \cite{MRS:NA}]
\label{MRS:thm:consTL2}
Assume that assumptions $(A)$--$(C)$ hold for all $T>0$.
If, additionally,
\begin{equation}\label{MRS:eq:ccondli0}
\liminf_{t\to\infty} \frac{\varMRS B_t}{G(t)^2} = 0,
\end{equation}
then the estimator $\hat\theta_T$
converges to $\theta$ as $T\to \infty$ almost surely and in mean square.
\end{theorem}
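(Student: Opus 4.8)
The plan is to reduce everything to showing that the ``information'' $d(T):=\int_0^T g(s)\,h_T(s)\,ds$ diverges as $T\to\infty$. By Theorem~\ref{MRS:th-L} we have $\varMRS(\hat\theta_T-\theta)=1/d(T)$, so $d(T)\to\infty$ yields the mean-square convergence immediately; the almost sure convergence will require identifying the numerator of $\hat\theta_T-\theta$ with a time-changed Wiener process.

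First I would substitute $X_s=\theta G(s)+B_s$ into \eqref{MRS:eq:defthetaC} and use assumption~$(B)$ to write $\hat\theta_T-\theta=N_T/d(T)$, where $N_T:=\int_0^T h_T(s)\,dB_s$ is a centered Gaussian random variable. Using assumption~$(A)$, the self-adjointness of $\Gamma_T$ and the relation $\Gamma_T h_T=g$ from~$(C)$, I would compute, for $t\le T$, $\covMRS(N_T,B_t)=\langle\Gamma_T h_T,\indicatorMRS_{[0,t]}\rangle=\langle g,\indicatorMRS_{[0,t]}\rangle=G(t)$, and, for $T_1\le T_2$ (with $h_{T_1}$ extended by zero on $[T_1,T_2]$), $\covMRS(N_{T_1},N_{T_2})=\langle\Gamma_{T_2}h_{T_1},h_{T_2}\rangle=\langle h_{T_1},\Gamma_{T_2}h_{T_2}\rangle=\langle h_{T_1},g\rangle=d(T_1)$. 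In particular $\varMRS N_T=d(T)$, so Cauchy--Schwarz shows $d(\cdot)$ is nondecreasing, and the covariance identity shows that $N=\{N_T,\,T>0\}$ is a centered Gaussian process whose increments over disjoint intervals are uncorrelated, hence (being jointly Gaussian) independent.

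Next I would prove $d(T)\to\infty$. Cauchy--Schwarz applied to $\covMRS(N_T,B_t)=G(t)$ gives $G(t)^2\le\varMRS(N_T)\,\varMRS(B_t)=d(T)\,\varMRS B_t$, i.e.\ $d(T)\ge G(t)^2/\varMRS B_t$ for all $t\le T$. By hypothesis~\eqref{MRS:eq:ccondli0} there is a sequence $t_k\uparrow\infty$ with $\varMRS B_{t_k}/G(t_k)^2\to0$, and since $d(\cdot)$ is nondecreasing this forces $d(T)\to\infty$; hence $\MEMRS(\hat\theta_T-\theta)^2=1/d(T)\to0$. For the almost sure statement I would use that a centered Gaussian process with independent increments and nondecreasing variance function $d(\cdot)$ can be written, after passing to a right-continuous modification (possibly on an enlarged probability space), as $N_T=W_{d(T)}$ for a standard Wiener process $W$; since $W_u/u\to0$ a.s.\ as $u\to\infty$ and $d(T)\uparrow\infty$, it follows that $\hat\theta_T-\theta=W_{d(T)}/d(T)\to0$ a.s. Equivalently, one may invoke the strong law of large numbers for square-integrable martingales, $N_T/\langle N\rangle_T\to0$ a.s.\ on $\{\langle N\rangle_\infty=\infty\}$.

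The main obstacle, I expect, is the covariance identity $\covMRS(N_{T_1},N_{T_2})=d(T_1\wedge T_2)$: it is precisely this that promotes the single-time normal law of Theorem~\ref{MRS:th-L} to the independent-increments (martingale) structure needed for the pathwise convergence, and it relies crucially on the relation $\Gamma_T h_T=g$ holding for \emph{every} $T$ and on the self-adjointness of $\Gamma_T$, with some care required to keep track of which operator $\Gamma_{T_1}$ or $\Gamma_{T_2}$ acts on which integrand and of the zero-extension of $h_{T_1}$.
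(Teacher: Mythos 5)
Your argument is correct and is essentially the intended one: the paper itself defers the proof of Theorem~\ref{MRS:thm:consTL2} to \cite{MRS:NA}, where the same route is taken — write $\hat\theta_T-\theta=N_T/d(T)$ with $N_T=\int_0^T h_T\,dB$, use $\langle\Gamma_T h_T,\indicatorMRS_{[0,t]}\rangle=G(t)$ and Cauchy--Schwarz to get $d(T)\ge G(t)^2/\varMRS B_t$, deduce $d(T)\uparrow\infty$ from \eqref{MRS:eq:ccondli0} and the monotonicity of $d$, and conclude the almost sure convergence from the fact that $N$ is a Gaussian martingale with bracket $d(T)$ (equivalently, a time-changed Wiener process). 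Your covariance identity $\covMRS(N_{T_1},N_{T_2})=d(T_1\wedge T_2)$, which yields both the monotonicity of $d$ and the independent-increment structure, is exactly the key step, and your handling of the zero-extension of $h_{T_1}$ and the self-adjointness of $\Gamma_{T_2}$ is sound.
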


\begin{theorem}[Relations between discrete and continuous MLEs \cite{MRS:NA}]\label{MRS:thm:thaxL2}
Let the assumptions of Theorem~\ref{MRS:th-L} hold.
Construct the estimator $\hat\theta^{(N)}$ from \eqref{MRS:eq:discr-mle}
by observations $X_{Tk/N}$, $k=1,\ldots,N$.
Then
\begin{enumerate}
\item
the estimator $\hat\theta^{(N)}$ converges to $\hat\theta_T$
in mean square, as $N\to\infty$,
\item
the estimator $\hat\theta^{(2^n)}$ converges to $\hat\theta_T$
almost surely, as $n\to\infty$.
\end{enumerate}
\end{theorem}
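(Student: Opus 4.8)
The plan is to realise both estimators, after subtracting their deterministic part, as normalised centred Gaussian variables lying in the Gaussian space $\mathcal H_B$ (the $L_2(\Omega)$-closure of $\operatorname{span}\{B_t:\,0\le t\le T\}$), and to recognise the discrete quantities as $L_2(\Omega)$-orthogonal projections of the continuous one. Write $\eta:=\int_0^T h_T(s)\,dB_s$, $D:=\int_0^T g(s)h_T(s)\,ds$, and, for the uniform partition $t_k=Tk/N$ ($k=0,\dots,N$), $\xi_N:=(\Delta G^{(N)})^\top(\Gamma^{(N)})^{-1}\Delta B^{(N)}$ and $D_N:=(\Delta G^{(N)})^\top(\Gamma^{(N)})^{-1}\Delta G^{(N)}$. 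Since $X_t=\theta G(t)+B_t$, substitution in \eqref{MRS:eq:discr-mle} and \eqref{MRS:eq:defthetaC} gives $\hat\theta^{(N)}=\theta+\xi_N/D_N$ and $\hat\theta_T=\theta+\eta/D$, where $D=\varMRS\eta>0$ and $D_N=\varMRS\xi_N>0$ (under $(A)$ the operator $\Gamma_T$ is nonnegative and $g=\Gamma_T h_T\not\equiv0$ by $(B)$--$(C)$; $\Gamma^{(N)}$ is nonsingular and $\Delta G^{(N)}\ne0$ for $N$ large since $G$ is continuous and not identically zero). Hence it suffices to control $\xi_N/D_N-\eta/D$ in $L_2(\Omega)$.

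The first step would be the projection identity $\xi_N=\MEMRS[\eta\mid\mathcal G_N]$ with $\mathcal G_N:=\sigma(X_{t_1},\dots,X_{t_N})=\sigma(B_{t_1},\dots,B_{t_N})$. Clearly $\xi_N\in\mathcal H_N:=\operatorname{span}\{B_{t_1},\dots,B_{t_N}\}$, so only $\eta-\xi_N\perp\mathcal H_N$ needs checking. Using the covariance rule following assumption $(A)$ and then $g=\Gamma_T h_T$,
\[
\MEMRS\bigl[\eta\,(B_{t_k}-B_{t_{k-1}})\bigr]=\langle\Gamma_T h_T,\,\indicatorMRS_{(t_{k-1},t_k]}\rangle=\int_{t_{k-1}}^{t_k}g(s)\,ds=G(t_k)-G(t_{k-1}),
\]
which is exactly the $k$-th coordinate of $\MEMRS[\xi_N\,\Delta B^{(N)}]=\Gamma^{(N)}(\Gamma^{(N)})^{-1}\Delta G^{(N)}=\Delta G^{(N)}$; thus $\eta-\xi_N$ is orthogonal to every increment $B_{t_k}-B_{t_{k-1}}$, hence to $\mathcal H_N$, and joint Gaussianity upgrades the $L_2(\Omega)$-projection to a conditional expectation. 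In particular $\covMRS(\xi_N,\eta)=\varMRS\xi_N=D_N$, so
\[
\MEMRS\bigl(\hat\theta^{(N)}-\hat\theta_T\bigr)^2=\frac{\varMRS\xi_N}{D_N^2}-\frac{2\,\covMRS(\xi_N,\eta)}{D_N D}+\frac{\varMRS\eta}{D^2}=\frac1{D_N}-\frac1D .
\]

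It then remains to prove $D_N\to D$, equivalently $\normMRS{\eta-\xi_N}_{L_2(\Omega)}^2=D-D_N\to0$; this is the step I expect to be the crux. It reduces to (i) $\eta\in\mathcal H_B$, which holds because the integral $\int_0^T h_T\,dB$ in \eqref{MRS:eq:defthetaC} is built as an $L_2(\Omega)$-limit of integrals of step functions, all lying in $\mathcal H_B$; and (ii) density of $\bigcup_N\mathcal H_N$ in $\mathcal H_B$. For (ii) I would first note that $B$ is mean-square continuous: by \eqref{MRS:eq:ass10st}, $\covMRS(B_s,B_t)=\langle\Gamma_T\indicatorMRS_{[0,s]},\indicatorMRS_{[0,t]}\rangle$ is jointly continuous in $(s,t)$ since $\Gamma_T$ is a bounded operator on $L_2[0,T]$ and $s\mapsto\indicatorMRS_{[0,s]}$ is continuous in $L_2[0,T]$, whence $\normMRS{B_s-B_{s_0}}_{L_2(\Omega)}\to0$ as $s\to s_0$. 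Then, for $\varepsilon>0$, choose a finite linear combination with $\normMRS{\eta-\sum_j c_j B_{s_j}}<\varepsilon/2$; for $N$ large each $s_j$ is within the mesh $T/N$ of a node $t_{k_j}$, so $\normMRS{\sum_j c_j B_{s_j}-\sum_j c_j B_{t_{k_j}}}<\varepsilon/2$, giving $\operatorname{dist}(\eta,\mathcal H_N)<\varepsilon$. Combined with the last display this yields $\MEMRS(\hat\theta^{(N)}-\hat\theta_T)^2\to0$, i.e. part~1.

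For part~2 I would exploit that the dyadic partitions are nested, so $(\mathcal G_{2^n})_{n\ge0}$ is an increasing filtration; by the identity of the second paragraph and the tower property, $(\xi_{2^n})_{n\ge0}=(\MEMRS[\eta\mid\mathcal G_{2^n}])_{n\ge0}$ is an $L_2$-bounded martingale ($\MEMRS\xi_{2^n}^2=D_{2^n}\le D$). By the martingale convergence theorem it converges almost surely and in $L_2$ to $\MEMRS[\eta\mid\mathcal G_\infty]$ with $\mathcal G_\infty:=\sigma\bigl(\bigcup_n\mathcal G_{2^n}\bigr)$; since the dyadic points $Tk/2^n$ are dense in $[0,T]$ and $B$ is mean-square continuous, every $B_t$, hence every element of $\mathcal H_B$, is $\mathcal G_\infty$-measurable, so the limit equals $\eta$. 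As $D_{2^n}\to D>0$ by the previous paragraph, $\hat\theta^{(2^n)}=\theta+\xi_{2^n}/D_{2^n}\to\theta+\eta/D=\hat\theta_T$ almost surely. Apart from the $L_2$-density step in the preceding paragraph, everything reduces to elementary linear algebra or a standard Hilbert-space/martingale fact.
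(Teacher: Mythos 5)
Your proof is correct. The paper does not reproduce an argument for this theorem (it only cites \cite{MRS:NA}), but the proof given there is essentially the one you wrote: identify $\xi_N=(\Delta G^{(N)})^\top(\Gamma^{(N)})^{-1}\Delta B^{(N)}$ as the orthogonal projection (equivalently, by Gaussianity, the conditional expectation) of $\eta=\int_0^T h_T\,dB_s$ onto the span of the observed increments, deduce $\MEMRS(\hat\theta^{(N)}-\hat\theta_T)^2=1/D_N-1/D$ together with $D_N\uparrow D$ via mean-square continuity of $B$ and density of the partition points, and obtain the almost sure convergence along dyadics from the $L_2$-bounded martingale $\xi_{2^n}=\MEMRS[\eta\mid\mathcal G_{2^n}]$.
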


\section{Application of estimators to models with various noises}
\label{MRS:sec:examples}

\subsection{The model with fractional Brownian motion and linear drift}
\begin{definition}
The \emph{fractional Brownian motion}\index{fractional Brownian motion} $B^H=\setMRS{B^H_t,t\ge0}$ with Hurst index $H\in(0,1)$ is a centered Gaussian process with $B_0=0$ and covariance function
\[
\MEMRS B^H_t B^H_s = \tfrac12\left(t^{2H}+s^{2H}-\absMRS{t-s}^{2H}\right).
\]
\end{definition}

Let $H\in(0,1)$ be fixed.
Consider the model
\begin{equation}\label{MRS:eq:model-fBm}
X_t=\theta t+B^H_t.
\end{equation}
where $X$ is an observed stochastic process,
$B^H$ is an unobserved fractional Brownian motion
with Hurst index $H$, and
$\theta$ is a parameter of interest.
Any finite slice of the stochastic process $\{B^H_t, \; t>0\}$
has a multivariate normal distribution with nonsingular
covariance matrix.
Since $\varMRS\left(B^H_t\right) =t^{2H}$,
the random process $B^H$ satisfies
Theorem~\ref{MRS:thm:consdiscrL2}.
Hence, we have the following result.
\begin{corollary}\label{MRS:cor:fBm}
Under condition $t_N\to +\infty$ as $N\to\infty$,
the estimator $\hat\theta^{(N)}$ in the model \eqref{MRS:eq:model-fBm} is $L_2$-consistent and
strongly consistent.
\end{corollary}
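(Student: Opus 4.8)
The plan is to check that the model \eqref{MRS:eq:model-fBm} meets every hypothesis of Theorem~\ref{MRS:thm:consdiscrL2} and then quote part~2 of that theorem. First I would pin down the data of the model in the notation of Section~\ref{MRS:sec:discr}: comparing \eqref{MRS:eq:model-fBm} with \eqref{MRS:eq:model}, the drift is $G(t)=t$, arising from $g\equiv 1$, which lies in $L_1[0,t]$ for every $t>0$; the noise process is $B=B^H$. Because $t_k>0$ for each $k$, the requirement that $G(t_k)\neq0$ for at least one $k$ is automatic, and the text already records that every finite slice of $\{B^H_t,\ t>0\}$ is multivariate normal with nonsingular covariance matrix, so the standing assumptions on $B$ hold and the estimator $\hat\theta^{(N)}$ of \eqref{MRS:eq:discr-mle} is well defined.

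The only substantive point is the asymptotic condition \eqref{MRS:eq:condconssi}. Using $\varMRS B^H_t=t^{2H}$ and $G(t)=t$,
\[
\frac{\varMRS B^H_t}{G^2(t)}=\frac{t^{2H}}{t^{2}}=t^{2H-2},
\]
and since $H\in(0,1)$ the exponent $2H-2$ is strictly negative, whence $t^{2H-2}\to0$ as $t\to\infty$. Together with the assumed divergence $t_N\to\infty$, this is exactly what part~2 of Theorem~\ref{MRS:thm:consdiscrL2} requires, and it delivers both $L_2(\Omega)$-convergence and almost sure convergence of $\hat\theta^{(N)}$ to $\theta$, which is the assertion.

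I do not anticipate any real obstacle: the corollary is a direct specialization of the general theorem, and the whole argument reduces to the elementary variance identity above plus the remark that $2H-2<0$ for all admissible $H$. The only feature worth flagging is that the implicit convergence rate worsens as $H\uparrow1$ (since $2-2H\downarrow0$), but strong and $L_2$ consistency persist for every $H\in(0,1)$.
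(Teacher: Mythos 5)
Your argument is correct and coincides with the paper's own justification: the authors likewise note that every finite slice of $B^H$ has a nonsingular normal distribution, compute $\varMRS B^H_t/G^2(t)=t^{2H-2}\to 0$ for $H\in(0,1)$, and then invoke part~2 of Theorem~\ref{MRS:thm:consdiscrL2}. Nothing is missing.
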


\begin{remark}
Bertin et al. \cite{MRS:BTT} considered the MLE in the model \eqref{MRS:eq:model-fBm} in the discrete scheme of observations, where the trajectory of $X$ was observed at the points
$t_k=\frac{k}{N}$, $k=1,2,\ldots,N^\alpha$, $\alpha>1$.
Hu et al.\ \cite{MRS:HNXZ11} investigated the MLE by discrete observations at the points $tk=kh$, $k=1,2,\ldots,N$. They considered even more general model of the form $X_t=\theta t+\sigma B^H_t$ with unknown $\sigma$.
In both papers $L_2$-consistency and
strongly consistency of the MLEs were proved.
Note that in Corollary~\ref{MRS:cor:fBm} both these schemes of observations are  allowed, since the only condition $t_N\to\infty$ is required.
\end{remark}

Now we consider the case of continuous-time observations and apply the results of Section \ref{MRS:sec:cont} to the model \eqref{MRS:eq:model-fBm}. Let $H\in(\frac12,1)$.
Denote by $\Gamma_H$ the corresponding operator $\Gamma$ for the model \eqref{MRS:eq:model-fBm}.
Then
\begin{equation}\label{MRS:eq:Gamma_H}
    (\Gamma_H f)(t) = H(2H-1)\int_0^T\frac{f(s)}{\absMRS{t-s}^{2-2H}}\,ds.
\end{equation}
For the function
\[
h_T(s) = C_H
s^{1/2-H} (T-s)^{1/2-H},
\]
$C_H=\left(H (2H-1) \mathrm{B} \left(H-\frac12, \frac32-H\right)\right)^{-1}$,
we have that
\begin{equation}\label{MRS:eq:GHhT1}
\Gamma_H h_T = \constoneMRS,
\end{equation}
see~\cite{MRS:nvv}.
The MLE is given by
\[
\hat\theta_T = \frac{T^{2H - 2}}{\BetaMRS(3/2-H,\: 3/2-H)}
\int_0^T s^{1/2-H} (T-s)^{1/2-H} \, dX_s.
\]
This estimator was studied in \cite{MRS:Breton98,MRS:nvv}, see also \cite[Example~3.11]{MRS:AJS}.

\begin{corollary}
Let $H\in(\frac12,1)$. The conditions of
Theorems \ref{MRS:th-L}, \ref{MRS:thm:consTL2} and \ref{MRS:thm:thaxL2},
are satisfied.
The estimator $\hat\theta_T$ is $L_2$-consistent
and strongly consistent.
For fixed $T$, it can be approximated by discrete-sample estimator
in mean-square sense.
\end{corollary}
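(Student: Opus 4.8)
The plan is to verify that the three bundled assumptions $(A)$, $(B)$, $(C)$ hold for the model \eqref{MRS:eq:model-fBm} with $H\in(\tfrac12,1)$ and every $T>0$, and then to check the single extra hypothesis \eqref{MRS:eq:ccondli0} needed for Theorem~\ref{MRS:thm:consTL2}; once all of that is in place, the conclusions about $L_2$-consistency, strong consistency, and the mean-square approximation by discrete-sample estimators follow immediately by quoting Theorems~\ref{MRS:th-L}, \ref{MRS:thm:consTL2} and \ref{MRS:thm:thaxL2}. For $(A)$ I would exhibit $\Gamma_H$ explicitly as the operator in \eqref{MRS:eq:Gamma_H}; the point is that for $H>\tfrac12$ the fractional Brownian covariance admits the representation $\MEMRS B^H_sB^H_t = H(2H-1)\int_0^s\int_0^t \absMRS{u-v}^{2H-2}\,du\,dv$, which is exactly $\langle \Gamma_H\indicatorMRS_{[0,s]},\indicatorMRS_{[0,t]}\rangle$, and the kernel $\absMRS{u-v}^{2H-2}$ being symmetric and (as a weakly singular kernel on a bounded interval) defining a bounded self-adjoint operator on $L_2[0,T]$ gives the self-adjointness and boundedness. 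Assumption $(B)$ is trivial here since $G(t)=t$, so $g\equiv 1\in L_2[0,T]$ and $G$ is not identically zero.

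The substantive point is assumption $(C)$: I must produce $h_T\in L_2[0,T]$ with $\Gamma_H h_T = g = \indicatorMRS_{[0,T]}$, i.e.\ solve the Fredholm integral equation of the first kind $H(2H-1)\int_0^T \absMRS{t-s}^{2H-2} h_T(s)\,ds = 1$ for $t\in[0,T]$. This is precisely the content of \eqref{MRS:eq:GHhT1}: the stated $h_T(s)=C_H s^{1/2-H}(T-s)^{1/2-H}$ solves it, a fact borrowed from \cite{MRS:nvv} (and it also shows $\int_0^T g(s)h_T(s)\,ds = \int_0^T h_T(s)\,ds = C_H\,T^{2-2H}\BetaMRS(\tfrac32-H,\tfrac32-H)>0$, so the MLE is well defined and the stated closed form for $\hat\theta_T$ is recovered). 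Since $1/2-H\in(-1/2,0)$, the function $h_T$ is square-integrable on $[0,T]$ — the singularities at the endpoints are integrable after squaring — so $h_T\in L_2[0,T]$ as required. I expect this verification that $h_T$ both solves the integral equation and lies in $L_2$ to be the main (and essentially only) obstacle; everything else is bookkeeping, and in fact the equation-solving is cited rather than reproved.

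Finally, for \eqref{MRS:eq:ccondli0} I would simply compute $\varMRS B^H_t = t^{2H}$ and $G(t)^2 = t^2$, so
\[
\frac{\varMRS B^H_t}{G(t)^2} = \frac{t^{2H}}{t^2} = t^{2H-2} \to 0 \quad\text{as } t\to\infty,
\]
because $2H-2<0$ for $H<1$; hence the $\liminf$ is $0$ (indeed the full limit is $0$), and Theorem~\ref{MRS:thm:consTL2} applies. With $(A)$--$(C)$ holding for all $T>0$ and \eqref{MRS:eq:ccondli0} verified, Theorems~\ref{MRS:th-L}, \ref{MRS:thm:consTL2} and \ref{MRS:thm:thaxL2} yield, respectively, the explicit unbiased normal MLE, its $L_2$- and almost-sure convergence to $\theta$ as $T\to\infty$, and the mean-square convergence of the discrete-sample estimators $\hat\theta^{(N)}$ to $\hat\theta_T$ for fixed $T$, which is exactly the assertion of the corollary.
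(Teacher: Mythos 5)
Your proposal is correct and follows essentially the same route the paper takes: assumption (A) via the operator $\Gamma_H$ of \eqref{MRS:eq:Gamma_H}, assumption (C) via the explicit solution $h_T(s)=C_H s^{1/2-H}(T-s)^{1/2-H}$ of $\Gamma_H h_T=\indicatorMRS_{[0,T]}$ cited from \cite{MRS:nvv} (with the easy check that $1-2H>-1$ gives $h_T\in L_2[0,T]$), and condition \eqref{MRS:eq:ccondli0} from $\varMRS B^H_t/G(t)^2=t^{2H-2}\to 0$. The paper leaves these verifications implicit, so your write-up is simply a more explicit version of the same argument.
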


\subsection{Model with fractional Brownian motion and power drift}
Now we generalize the model \eqref{MRS:eq:model-fBm} for the case of the non-linear drift function $G(t)=t^{\alpha + 1}$.
Let $0 < H < 1$ and
$\alpha >  -1$.
Consider the process
\begin{equation}\label{MRS:eq:ex-fbm}
X_t = \theta t^{\alpha + 1} + B_t^H
\end{equation}
This is a particular case of model~\eqref{MRS:eq:model},
with $g(t) = (\alpha + 1) t^\alpha$.

Now verify the conditions of the theorems.
The condition \eqref{MRS:eq:condconssi}
%$\lim_{t\to \infty} (G(t))^{-2} \varMRS B_t^H = 0$
holds true if and only if
$\alpha > H - 1$.

\begin{corollary}
If $\alpha > H-1$,  the model \eqref{MRS:eq:ex-fbm} satisfies the conditions of
Theorem~\ref{MRS:thm:consdiscrL2}.
The estimator $\hat\theta^{(N)}$ in the model \eqref{MRS:eq:ex-fbm}
is $L_2$-consistent and strongly consistent
(provided that $\lim_{N\to\infty} t_N = +\infty$).
\end{corollary}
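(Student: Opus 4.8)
The plan is to recognize \eqref{MRS:eq:ex-fbm} as an instance of the general model \eqref{MRS:eq:model} and then simply verify the hypotheses of Theorem~\ref{MRS:thm:consdiscrL2}. First I would record the representation $G(t) = t^{\alpha+1} = \int_0^t g(s)\,ds$ with $g(s) = (\alpha+1)s^{\alpha}$, and note that $g \in L_1[0,t]$ for every $t>0$ precisely because $\alpha > -1$; this is what makes \eqref{MRS:eq:ex-fbm} a legitimate special case of \eqref{MRS:eq:model}. The driving noise $B = B^H$ is a centered Gaussian process with $B^H_0 = 0$ whose finite-dimensional distributions are nondegenerate multivariate normals, so the standing assumptions on $B$ from Section~\ref{MRS:sec:discr} are met, and the requirement that $G(t_k)\neq 0$ for at least one $k$ holds as soon as some $t_k>0$.

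Next I would check the only substantive hypothesis, namely \eqref{MRS:eq:condconssi}. Since $\varMRS B^H_t = t^{2H}$ and $G^2(t) = t^{2\alpha+2}$, we have
\[
\frac{\varMRS B^H_t}{G^2(t)} = t^{2H - 2\alpha - 2} = t^{-2(\alpha + 1 - H)},
\]
which tends to $0$ as $t\to\infty$ if and only if $\alpha + 1 - H > 0$, i.e. $\alpha > H - 1$. Under the standing hypothesis of the corollary this is exactly the case, so \eqref{MRS:eq:condconssi} holds.

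Finally, with $\lim_{N\to\infty} t_N = +\infty$ assumed, both parts of Theorem~\ref{MRS:thm:consdiscrL2} apply directly: part~1 gives that $\hat\theta^{(N)}$ is unbiased and normally distributed, and part~2 yields the convergence of $\hat\theta^{(N)}$ to $\theta$ almost surely and in $L_2(\Omega)$, which is the claimed strong and $L_2$-consistency. There is essentially no obstacle here; the only computation is the power-law asymptotics of $\varMRS B^H_t / G^2(t)$, and the real content of the statement is the identification of the sharp threshold $\alpha > H-1$ beyond which \eqref{MRS:eq:condconssi} fails.
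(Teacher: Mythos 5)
Your proposal is correct and follows the paper's own (very brief) argument exactly: the only substantive check is that $\varMRS B^H_t/G^2(t) = t^{2H-2\alpha-2}\to 0$ if and only if $\alpha > H-1$, after which Theorem~\ref{MRS:thm:consdiscrL2} applies directly. Your additional remarks on $g\in L_1[0,t]$ and the nondegeneracy of the finite-dimensional distributions of $B^H$ are routine verifications the paper leaves implicit.
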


The condition $(B)$: $g\in L_2[0,T]$ holds true
if and only if $\alpha > -\frac12$.
The integral equation $\Gamma h = g$ is rewritten as
\[
\int_0^T \frac{h(s)\,ds}{|t-s|^{2H-2}} = \frac
{\alpha+1}{(2H-1)H} t^\alpha .
\]
If $\alpha > 2H-2$, then the solution is
\begin{equation}\label{MRS:eq:eqdefhpg}
h(t) = \textrm{const} \cdot
\left(
\frac{T^\alpha}{t^{H-\frac{1}{2}}  (T-t)^{H-\frac{1}{2}}} -
\alpha t^{\alpha+1-2H} W\left(\textstyle
\frac{T}{t},\; \alpha,\; H-\frac{1}{2}\right)
\right),
\end{equation}
where
$W\left(\textstyle
\frac{T}{t},\: \alpha,\: H-\frac{1}{2}\right)
= \int_0^{\frac{T}{t}-1}
(v+1)^{\alpha-1} v^{\frac{1}{2}-H} \, dv$.
%The formula for $h(t)$ was obtained with Lemma~\ref{MRS:lemma:soltoiepk};
%it can also be verified by substitution of the right-hand side of \eqref{MRS:eq:eqdefhpg}
%into the integral equation.
The asymptotic behaviour of the function
$W\left(\frac{T}{t},\: \alpha, \: H-\frac12\right)$
as $t \to 0+$ is
\[
W\left(\textstyle
\frac{T}{t},\: \alpha,\: H-\frac{1}{2}\right)
\sim
\begin{cases}
\mathrm{B}\left(\frac32 - H, \: H - \frac12 - \alpha\right)
& \mbox{if $\alpha < H - \frac12$},\\
\ln(T/t)
& \mbox{if $\alpha = H - \frac12$},\\
\frac{2}{2\alpha+1-2H}
\frac{T^{\alpha-H+\frac12}}
     {t^{\alpha-H+\frac12}}
& \mbox{if $\alpha > H - \frac12$}.
\end{cases}
\]
Therefore, the function $h(t)$ defined in \eqref{MRS:eq:eqdefhpg}
is square integrable if
$
\alpha+1-2H - \max\left(0, \: \alpha - H + \frac12\right) > -\frac12
$,
which holds if $\alpha > 2H - \frac32$.
Note that if $\alpha > 2H - \frac32$, then the following inequalities
hold true:
$\alpha > 2H - 2$ (whence $h$ defined in \eqref{MRS:eq:eqdefhpg} is indeed a solution
to the integral equation $\Gamma h = g$),
$\alpha > H-1$ (whence conditions \eqref{MRS:eq:condconssi} and so \eqref{MRS:eq:ccondli0} are satisfied), and
$\alpha > -\frac12$ (whence condition (B) is satisfied).

\begin{corollary}
If $\alpha > 2 H - \frac32$, the conditions of
Theorems \ref{MRS:th-L}, \ref{MRS:thm:consTL2} and \ref{MRS:thm:thaxL2},
are satisfied.
The estimator $\hat\theta_T$ is $L_2$-consistent
and strongly consistent.
For fixed $T$, it can be approximated by discrete-sample estimator
in mean-square sense.
\end{corollary}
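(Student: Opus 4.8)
The plan is to verify, one at a time, the hypotheses of Theorems~\ref{MRS:th-L}, \ref{MRS:thm:consTL2} and \ref{MRS:thm:thaxL2} for the model~\eqref{MRS:eq:ex-fbm}, reusing the computations already assembled just above the statement, and then simply to quote the conclusions of those theorems. Throughout one takes $H\in(\tfrac12,1)$, so that the operator $\Gamma=\Gamma_H$ of~\eqref{MRS:eq:Gamma_H} is available and assumption $(A)$ holds for every $T>0$. Since $\alpha>2H-\tfrac32$ and $2H-\tfrac32>-\tfrac12$ when $H>\tfrac12$, one gets $\alpha>-\tfrac12$, hence $g(t)=(\alpha+1)t^\alpha\in L_2[0,T]$ and assumption $(B)$ holds for every $T>0$.

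The substantive point is assumption $(C)$. Because $\alpha>2H-\tfrac32>2H-2$, the function $h$ in~\eqref{MRS:eq:eqdefhpg} is a genuine solution of $\Gamma_H h=g$ on $[0,T]$, so all that remains is to check $h\in L_2[0,T]$. Near $t=T$ the only singular factor is $(T-t)^{1/2-H}$, which is square integrable because $H<1$; near $t=0$ one substitutes the three-case asymptotics of $W(T/t,\alpha,H-\tfrac12)$ recorded above and finds $h(t)$ comparable to a power of $t$ with exponent $\alpha+1-2H-\max(0,\alpha-H+\tfrac12)$, so square integrability near $0$ holds exactly when this exponent exceeds $-\tfrac12$. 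Splitting into the sub-cases $\alpha\le H-\tfrac12$ and $\alpha>H-\tfrac12$ shows this is equivalent to $\alpha>2H-\tfrac32$. Hence $(C)$ holds for every $T>0$, which completes the hypotheses of Theorem~\ref{MRS:th-L} and (since those are precisely what is needed) of Theorem~\ref{MRS:thm:thaxL2}.

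For the additional condition~\eqref{MRS:eq:ccondli0} of Theorem~\ref{MRS:thm:consTL2}, one computes $\varMRS B^H_t=t^{2H}$ and $G(t)=t^{\alpha+1}$, so that $\varMRS B_t/G(t)^2=t^{2H-2\alpha-2}\to0$ as $t\to\infty$ whenever $\alpha>H-1$; and $2H-\tfrac32>H-1$ for $H>\tfrac12$, so in fact the stronger limit~\eqref{MRS:eq:condconssi} holds, a fortiori~\eqref{MRS:eq:ccondli0}.

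With all hypotheses in place the conclusions are immediate: Theorem~\ref{MRS:th-L} produces the unbiased, normally distributed MLE $\hat\theta_T$, Theorem~\ref{MRS:thm:consTL2} gives its $L_2$- and almost sure consistency as $T\to\infty$, and Theorem~\ref{MRS:thm:thaxL2} gives the mean-square (and almost sure along $N=2^n$) approximation of $\hat\theta_T$ by the discrete-sample estimators $\hat\theta^{(N)}$ built from $X_{Tk/N}$. The only genuinely non-routine step is the endpoint analysis of $h$ at $t=0$, i.e.\ extracting the sharp exponent from $W(T/t,\alpha,H-\tfrac12)$; everything else reduces to the inequalities $2H-\tfrac32>2H-2$, $2H-\tfrac32>H-1$ and $2H-\tfrac32>-\tfrac12$, all valid for $H\in(\tfrac12,1)$.
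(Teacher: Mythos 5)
Your proposal is correct and follows essentially the same route as the paper: the corollary is just the synthesis of the computations preceding it (condition $(B)$ from $\alpha>-\tfrac12$, condition $(C)$ from the square integrability of $h$ in \eqref{MRS:eq:eqdefhpg} via the asymptotics of $W$, and \eqref{MRS:eq:ccondli0} from $\alpha>H-1$), all reduced to the single inequality $\alpha>2H-\tfrac32$. The only minor quibble is your claim that square integrability of $h$ near $0$ is \emph{equivalent} to $\alpha>2H-\tfrac32$; the paper only asserts (and the corollary only needs) the sufficiency direction, since a priori cancellation between the two terms of \eqref{MRS:eq:eqdefhpg} is not ruled out.
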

%\eqref{MRS:eq:ccondli0}

\subsection{The model with Brownian and fractional Brownian motion}
Consider the following model:
\begin{equation}\label{MRS:eq:mixed}
X_t = \theta t + W_t + B_t^{H},
\end{equation}
where $W$ is a standard Wiener process,
$B^{H}$ is a fractional Brownian motion
with Hurst index $H$,
and random processes $W$ and $B^{H}$ are independent.
%The process $W_t + B_t^{H}$ admits representation
%\eqref{MRS:eq-Bcomb} with $B^{\rm C}=B^H$.
The corresponding operator $\Gamma$ is
$\Gamma = I + \Gamma_H$,
where $\Gamma_H$ is defined by \eqref{MRS:eq:Gamma_H}.
The operator $\Gamma_H$ is self-adjoint and positive semi-definite.
Hence, the operator $\Gamma$ is invertible.  Thus
Assumption~(C) holds true.

In other words, the problem is reduced to the solving of the following Fredholm integral equation of the second kind
\begin{equation}\label{MRS:integralequationCCK}
h_T(u) +  H_2(2H_2-1)\int_0^T h_T(s)\absMRS{s-u}^{2H_2-2}\,ds=1,   \quad u\in [0,T].
\end{equation}
This approach to the drift parameter estimation in the model with mixed fractional Brownian motion was first developed in \cite{MRS:CaiChigKlept}.

Note also that the function $h_T = \Gamma_T^{-1} \constoneMRS$ can be evaluated iteratively
\begin{equation}\label{MRS:eq:approx}
h_T = \sum_{k=0}^\infty
\frac{\left( \frac12 \, \normMRS{\Gamma^H_T} \, I - \Gamma^H_T\right)^k \constoneMRS}
     {\left( 1 + \frac12 \, \normMRS{\Gamma^H_T}\right)^{k+1}} \,.
\end{equation}

\subsection{Model with subfractional Brownian motion}
\label{MRS:sec:examplesubfr}
\begin{definition}
The \emph{subfractional Brownian motion}\index{subfractional Brownian motion} $\widetilde B^H=\setMRS{\widetilde B^H_t,t\ge0}$  with Hurst parameter $H\in(0,1)$
is a centered Gaussian random process  with covariance function
\begin{equation}\label{MRS:eq:cov-sfbm}
\covMRS\left(\widetilde B^H_s,\widetilde B^H_t \right)= \frac
{2\,|t|^{2H} + 2\,|s|^{2H} - |t-s|^{2H} - |t+s|^{2H}}
2.
\end{equation}
\end{definition}
We refer \cite{MRS:BGT,MRS:Tudor07} for properties of this process.
Obviously, neither $\widetilde B^H\!$, nor its increments  are stationary.
If $\{B_t^H, \; t\in\mathbb{R}\}$ is
a fractional Brownian motion,
then the random process
$\frac{B^H_t + B^H_{-t}}{\sqrt{2}}$
is a subfractional Brownian motion.
Evidently, mixed derivative of the covariance function \eqref{MRS:eq:cov-sfbm} equals
\begin{equation}
\label{MRS:eq:K-subfr}
K_H(s,t) := \frac{\partial^2 \covMRS\left(\widetilde B^H_s, \widetilde B^H_t\right)}{\partial t \, \partial s} =
H \, (2H-1) \left(|t-s|^{2H-2} - |t+s|^{2H-2}\right) .
\end{equation}
If $H\in(\frac12,1)$,
then the operator $\Gamma = \widetilde\Gamma_H$ that satisfies
\eqref{MRS:eq:ass10st} for $\widetilde B^H$ equals
\begin{equation}\label{MRS:eq:oper-sfbm}
\widetilde\Gamma_H f(t) = \int_0^T K_H(s,t) f(s) \, ds .
\end{equation}

Consider the model \eqref{MRS:eq:model}
for $G(t) = t$ and $B=\widetilde B^H$:
%So the observed process equals
\begin{equation}\label{MRS:model-linear-FBM}
X_t = \theta t + \widetilde B^H_t.
\end{equation}
Let us construct the estimators $\hat\theta^{(N)}$ and $\hat\theta_T$ from \eqref{MRS:eq:discr-mle} and \eqref{MRS:eq:defthetaC} respectively and establish their properties.
In particular, Proposition \ref{MRS:prop-sfbm-1} allows to define
finite-sample estimator $\hat\theta^{(N)}$.

\begin{proposition}
\label{MRS:prop-sfbm-1}
The linear equation $\widetilde\Gamma_H f = 0$
has only trivial solution in $L_2[0,T]$.
As a consequence, the finite slice
$\left(\widetilde B^H_{t_1}, \ldots, \widetilde B^H_{t_N}\right)$
with $0 < t_1 < \ldots < t_N$
has a multivariate normal distribution
with nonsingular covariance matrix.
\end{proposition}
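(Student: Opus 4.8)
The plan is to deduce the nonsingularity of the finite-dimensional covariance matrices from the injectivity of $\widetilde\Gamma_H$, and to obtain the latter by showing that the quadratic form $\langle\widetilde\Gamma_H f,f\rangle$ is strictly positive for every nonzero $f\in L_2[0,T]$. The first step is to ``unfold'' the kernel $K_H$ by an odd reflection: given $f\in L_2[0,T]$, let $\tilde f\in L_2[-T,T]$ be its odd extension, $\tilde f(s)=-f(-s)$ for $s\in[-T,0)$. Splitting $[-T,T]^2$ into its four sub-squares and changing variables $s\mapsto-s$, $t\mapsto-t$ where appropriate, one checks the identity
\[
\int_{-T}^{T}\!\!\int_{-T}^{T}\absMRS{t-s}^{2H-2}\,\tilde f(s)\,\tilde f(t)\,ds\,dt
=\frac{2}{H(2H-1)}\,\langle\widetilde\Gamma_H f,\,f\rangle ,
\]
using $K_H(s,t)=H(2H-1)\bigl(\absMRS{t-s}^{2H-2}-\absMRS{t+s}^{2H-2}\bigr)$ and $H(2H-1)>0$ for $H\in(\tfrac12,1)$. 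Hence it suffices to show that the Riesz kernel $\absMRS{t-s}^{2H-2}$ is strictly positive definite on $L_2[-T,T]$, i.e.\ that the left-hand side is $>0$ whenever $\tilde f\not\equiv0$.

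For this I would extend $\tilde f$ by zero to a compactly supported $F\in L_2(\mathbb R)$ and pass to the Fourier side. Since $2H-2\in(-1,0)$, the locally integrable function $\absMRS{x}^{2H-2}$ has Fourier transform $c_H\absMRS{\xi}^{1-2H}$ with $c_H>0$, and the Plancherel/Parseval formula (valid here by standard Riesz-potential theory) yields
\[
\int_{-T}^{T}\!\!\int_{-T}^{T}\absMRS{t-s}^{2H-2}\,\tilde f(s)\,\tilde f(t)\,ds\,dt
=c_H\int_{\mathbb R}\absMRS{\xi}^{1-2H}\,\absMRS{\widehat F(\xi)}^2\,d\xi ,
\]
the right-hand integral being finite (near $0$ because $1-2H>-1$ and $\widehat F$ is bounded, near $\infty$ because $\widehat F\in L_2$). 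As $F$ has compact support and is not identically zero, $\widehat F$ extends to an entire function and is therefore nonzero a.e.; since $\absMRS{\xi}^{1-2H}>0$ a.e., the right-hand side is strictly positive. This proves $\langle\widetilde\Gamma_H f,f\rangle>0$ for $f\neq0$, so $\ker\widetilde\Gamma_H=\{0\}$. (Alternatively, injectivity of the Riesz operator on $L_2$ of a bounded interval can be quoted from the structure theory of fractional Brownian motion.)

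It remains to deduce the second assertion. By \eqref{MRS:eq:ass10st}, $\covMRS(\widetilde B^H_s,\widetilde B^H_t)=\langle\widetilde\Gamma_H\indicatorMRS_{[0,s]},\indicatorMRS_{[0,t]}\rangle$, so for $a=(a_1,\dots,a_N)^\top\in\mathbb R^N$ one has $\varMRS\bigl(\sum_{i=1}^N a_i\widetilde B^H_{t_i}\bigr)=\langle\widetilde\Gamma_H\psi,\psi\rangle$ with $\psi=\sum_{i=1}^N a_i\indicatorMRS_{[0,t_i]}$; in other words, the covariance matrix is the Gram matrix of $\indicatorMRS_{[0,t_1]},\dots,\indicatorMRS_{[0,t_N]}$ for the positive semidefinite form $\langle\widetilde\Gamma_H\cdot\,,\cdot\rangle$. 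If it were singular, some $a\neq0$ would give $\langle\widetilde\Gamma_H\psi,\psi\rangle=0$; the Cauchy--Schwarz inequality for this form then forces $\widetilde\Gamma_H\psi=0$, hence $\psi=0$ by the first part, i.e.\ $\sum_i a_i\indicatorMRS_{[0,t_i]}=0$ in $L_2[0,T]$, which is impossible since the indicators of $[0,t_i]$ with distinct $0<t_1<\dots<t_N$ are linearly independent. (One may instead argue directly from $\widetilde B^H_t=(B^H_t+B^H_{-t})/\sqrt2$, since $\sum_i a_i\widetilde B^H_{t_i}$ is then a combination of a two-sided fBm at the $2N$ distinct points $\pm t_i$, whose covariance matrix is nonsingular.) The main point of the whole argument is the strict positive-definiteness of the Riesz kernel: everything hinges on $\widehat F\neq0$ a.e.\ for compactly supported $F$ together with the a.e.\ positivity of the multiplier $\absMRS{\xi}^{1-2H}$; the reflection identity and the Gram-matrix reduction are routine.
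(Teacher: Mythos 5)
Your argument is correct. The two key moves are (i) the odd-reflection identity, which converts the sub-fractional quadratic form into the fractional one on a symmetric interval,
\[
\langle\widetilde\Gamma_H f,\,f\rangle=\tfrac12\,H(2H-1)\int_{-T}^{T}\!\!\int_{-T}^{T}\absMRS{t-s}^{2H-2}\tilde f(s)\tilde f(t)\,ds\,dt ,
\]
and (ii) strict positive definiteness of the Riesz kernel, which you obtain on the Fourier side from the multiplier $c_H\absMRS{\xi}^{1-2H}$ together with the Paley--Wiener fact that $\widehat F\neq0$ a.e.\ for a nonzero compactly supported $F$; the Gram-matrix reduction for the covariance matrix is then routine, and the linear independence of the indicators $\indicatorMRS_{[0,t_i]}$ closes the argument. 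Step (i) is exactly the reduction suggested by the paper's own observation that $\widetilde B^H_t=(B^H_t+B^H_{-t})/\sqrt2$. For step (ii) the paper's toolkit points to a different, more self-contained route: by the identity $\langle\Gamma_H g,\,g\rangle=2H\Gamma(2H)\langle I^{H-1/2}_{0+}g,\,I^{H-1/2}_{T-}g\rangle$ and the analysis in Proposition~\ref{MRS:prop:acutea} (equality holds only for $g=0$, via the probabilistic representation $\langle\Gamma_H g,g\rangle=\MEMRS\bigl(\int g\,dB^H\bigr)^2$ and the injectivity of the fractional integration operators), one gets strict positivity of the fBm form on any interval, which after translating $[-T,T]$ to $[0,2T]$ yields the same conclusion without invoking Riesz-potential Plancherel theory. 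Your spectral argument is more general (it works for any interval and makes the positivity of the spectral density explicit) but imports the Parseval identity for the Riesz kernel, which you should either justify by a limiting argument from smooth compactly supported functions or support with a reference; with that reference supplied, the proof is complete.
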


Since $\varMRS\left(\widetilde B^H_t\right) = \left(2 - 2^{2 H -
1}\right) t^{2H}$, the random process $\widetilde B^H$ satisfies
Theorem~\ref{MRS:thm:consdiscrL2}. Hence, we have the following
result.
\begin{corollary}
Under condition $t_N\to +\infty$ as $N\to\infty$,
the estimator $\hat\theta^{(N)}$ in the model \eqref{MRS:model-linear-FBM} is $L_2$-consistent and
strongly consistent.
\end{corollary}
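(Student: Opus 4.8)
The plan is to apply Theorem~\ref{MRS:thm:consdiscrL2} directly, so the work reduces to checking that the subfractional model \eqref{MRS:model-linear-FBM} satisfies all of its hypotheses. First I would confirm that $\widetilde B^H$ fits the general framework of Section~\ref{MRS:sec:discr}: it is a centered Gaussian process with $\widetilde B^H_0 = 0$, and by Proposition~\ref{MRS:prop-sfbm-1} every finite slice $(\widetilde B^H_{t_1}, \ldots, \widetilde B^H_{t_N})$ with $0 < t_1 < \ldots < t_N$ has a nonsingular covariance matrix. This guarantees that the matrix $\Gamma^{(N)}$ is invertible, hence that the estimator $\hat\theta^{(N)}$ from \eqref{MRS:eq:discr-mle} is well-defined and that the likelihood computation underlying the MLE is valid.

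Next I would verify the key asymptotic condition \eqref{MRS:eq:condconssi}. Here $G(t) = t$, so $G^2(t) = t^2$, and using the variance formula $\varMRS\left(\widetilde B^H_t\right) = \left(2 - 2^{2H-1}\right) t^{2H}$ one obtains
\[
\frac{\varMRS\left(\widetilde B^H_t\right)}{G^2(t)} = \left(2 - 2^{2H-1}\right) t^{2H-2}.
\]
The coefficient $2 - 2^{2H-1}$ is a fixed positive constant for $H \in (0,1)$, since $2H-1 \in (-1,1)$ forces $2^{2H-1} \in (1/2, 2)$; and the exponent $2H-2$ is strictly negative for every $H \in (0,1)$. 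Therefore $t^{2H-2} \to 0$ as $t \to \infty$, and condition \eqref{MRS:eq:condconssi} holds.

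With these two facts in place, the combination of the hypothesis $t_N \to \infty$ and condition \eqref{MRS:eq:condconssi} lets me invoke part~2 of Theorem~\ref{MRS:thm:consdiscrL2}, which delivers simultaneously the almost sure and $L_2(\Omega)$ convergence of $\hat\theta^{(N)}$ to $\theta$, that is, both strong and $L_2$-consistency. I do not expect a genuine obstacle here: the only potentially delicate ingredient is the nonsingularity of the covariance matrices, but this has already been secured by Proposition~\ref{MRS:prop-sfbm-1}, so that the remaining verification of \eqref{MRS:eq:condconssi} is an elementary power-counting argument and the result follows immediately from the cited theorem.
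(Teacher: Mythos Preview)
Your proposal is correct and follows essentially the same route as the paper: compute $\varMRS(\widetilde B^H_t) = (2 - 2^{2H-1}) t^{2H}$, observe that condition~\eqref{MRS:eq:condconssi} holds because $2H-2<0$, and invoke Theorem~\ref{MRS:thm:consdiscrL2}. You supply more detail than the paper (explicitly citing Proposition~\ref{MRS:prop-sfbm-1} for the nonsingularity hypothesis and checking the sign of the constant), but the argument is the same.
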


In order to define the continuous-time MLE~\eqref{MRS:eq:defthetaC},
we have to solve an integral equation.
The following statement guarantees
the existence of the solution.
\begin{proposition}\label{MRS:prop-sfbm-2}
If $\frac12 < H < \frac34$, then
the integral equation
$\widetilde\Gamma_H h = \indicatorMRS_{[0,T]}$, that is
\begin{equation}
\label{MRS:eq:ieqK}
\int_0^T K_H(s,t) h(s) \, ds = 1 \qquad
\mbox{for almost all $t \in (0,T)$}
\end{equation}
has a unique solution $h \in L_2[0,T]$.
\end{proposition}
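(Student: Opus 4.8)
\emph{Uniqueness} is immediate: if $h_1,h_2\in L_2[0,T]$ both satisfy \eqref{MRS:eq:ieqK}, then $\widetilde\Gamma_H(h_1-h_2)=0$, so $h_1=h_2$ by Proposition~\ref{MRS:prop-sfbm-1}. Hence the real content is \emph{existence}, i.e.\ showing that $\indicatorMRS_{[0,T]}$ lies in the range of $\widetilde\Gamma_H$ (this is exactly condition~$(C)$ of Section~\ref{MRS:sec:cont} for the subfractional model), and the plan is to convert the first-kind equation \eqref{MRS:eq:ieqK} into a second-kind one amenable to the Fredholm alternative.

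To this end, split the kernel according to \eqref{MRS:eq:K-subfr}: write $\widetilde\Gamma_H=\Gamma_H-\Gamma_H^{+}$, where $\Gamma_H$ is the fractional operator \eqref{MRS:eq:Gamma_H} and $\Gamma_H^{+}$ is the integral operator with kernel $H(2H-1)\,|t+s|^{2H-2}$. By \eqref{MRS:eq:GHhT1} the function $h_T(s)=C_H s^{1/2-H}(T-s)^{1/2-H}$ lies in $L_2[0,T]$ and solves $\Gamma_H h_T=\indicatorMRS_{[0,T]}$; since $\Gamma_H$ is injective on $L_2[0,T]$, write $\Gamma_H^{-1}$ for its (unbounded) inverse. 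Then \eqref{MRS:eq:ieqK} reads $\Gamma_H(h-h_T)=\Gamma_H^{+}h$, that is,
\[
(I-A)\,h=h_T,\qquad A:=\Gamma_H^{-1}\Gamma_H^{+}.
\]
The plan is now to prove that $\Gamma_H^{+}$ maps $L_2[0,T]$ into the range of $\Gamma_H$ and that $A$ is a \emph{compact} operator on $L_2[0,T]$. Granting this, $I-A$ is a compact perturbation of the identity; it is injective, because $(I-A)h=0$ is equivalent to $\widetilde\Gamma_H h=0$ and hence, by Proposition~\ref{MRS:prop-sfbm-1}, to $h=0$. By the Fredholm alternative $I-A$ is then bijective on $L_2[0,T]$, so $(I-A)h=h_T$ has a unique solution, which is the sought solution of \eqref{MRS:eq:ieqK}.

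I expect the compactness of $A$ to be the main obstacle, and this is exactly where the hypothesis $H<\tfrac34$ enters. The kernel $|t+s|^{2H-2}$ is bounded and real-analytic away from the corner $(0,0)$, so $\Gamma_H^{+}f$ is smooth on $(0,T]$; the delicate point is the boundary behaviour as $t\to0^{+}$, where a Cauchy--Schwarz estimate only gives $|\Gamma_H^{+}f(t)|\lesssim\|f\|_{L_2}\,t^{2H-3/2}$ (for $H\in(\tfrac12,\tfrac34)$ an exponent in $(-\tfrac12,0)$), so one must control $\Gamma_H^{-1}$ applied to functions with precisely such an endpoint singularity, using the explicit weights from the inversion of $\Gamma_H$ (cf.\ \eqref{MRS:eq:GHhT1} and \cite{MRS:nvv}). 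The outcome of this analysis is that the solution $h$ behaves near $0$ like $t^{1-2H}$, which is square integrable exactly when $H<\tfrac34$ (at $H=\tfrac34$ it is of order $t^{-1/2}$ and just fails to lie in $L_2$). A more computational alternative, leading to the same threshold, is to extend functions on $[0,T]$ oddly to $[-T,T]$: a direct calculation with the covariance \eqref{MRS:eq:cov-sfbm} gives $\langle\widetilde\Gamma_H f,g\rangle_{L_2[0,T]}=\tfrac12\langle\Gamma_H^{[-T,T]}\widehat f,\widehat g\rangle_{L_2[-T,T]}$, where $\Gamma_H^{[-T,T]}$ is the fractional operator on $[-T,T]$; since the odd extension of $\indicatorMRS_{[0,T]}$ is $\operatorname{sgn}$, equation \eqref{MRS:eq:ieqK} becomes $\Gamma_H^{[-T,T]}\widehat h=\operatorname{sgn}$ with $\widehat h$ odd, and inverting the weakly singular kernel $|t-s|^{2H-2}$ by the classical formula one finds that the interior jump of $\operatorname{sgn}$ at $0$ produces a solution with an explicit $|u|^{1-2H}$ singularity there, once again in $L_2$ precisely for $H<\tfrac34$.
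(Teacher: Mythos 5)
Your uniqueness argument is complete: it is exactly the injectivity of $\widetilde\Gamma_H$ on $L_2[0,T]$ from Proposition~\ref{MRS:prop-sfbm-1}. The existence half, however, is where the entire content of the proposition lives, and there your text is a plan rather than a proof. In the primary route, the two claims on which everything rests --- that $\Gamma_H^{+}$ maps $L_2[0,T]$ into the range of $\Gamma_H$ (without which $A=\Gamma_H^{-1}\Gamma_H^{+}$ and the equation $(I-A)h=h_T$ are not even well defined), and that $A$ is compact --- are announced but never established. You yourself flag compactness as ``the main obstacle'' and then replace the argument by a heuristic: the Cauchy--Schwarz bound $|\Gamma_H^{+}f(t)|\lesssim \|f\|\,t^{2H-3/2}$ is correct, but nothing is proved about $\Gamma_H^{-1}$ acting on functions with that endpoint behaviour, and since $\Gamma_H^{-1}$ is unbounded, compactness of $\Gamma_H^{+}$ alone (it is Hilbert--Schmidt for $H>\tfrac12$) does not transfer to $A$. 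The claimed $t^{1-2H}$ behaviour of the solution, which is the only place the hypothesis $H<\tfrac34$ actually enters, is likewise asserted, not derived. As it stands the Fredholm-alternative route proves nothing beyond what Proposition~\ref{MRS:prop-sfbm-1} already gives.

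Your ``computational alternative'' is the right way to close the gap, and it is the route the paper's own machinery is built for: the odd-extension identity $\langle\widetilde\Gamma_H f,g\rangle_{L_2[0,T]}=\tfrac12\langle\Gamma_H^{[-T,T]}\widehat f,\widehat g\rangle_{L_2[-T,T]}$ is correct, it converts \eqref{MRS:eq:ieqK} into the single-kernel equation $\int_{-T}^{T}|t-s|^{2H-2}\widehat h(s)\,ds=\operatorname{sgn}(t)/(H(2H-1))$, and this is precisely an instance of equation \eqref{MRS:eq:ieq1} of Theorem~\ref{MRS:lemma:soltoiepk} (Appendix~\ref{MRS:sec:appie}) with $b=2T$, $p=2-2H$ and $f$ a shifted sign function; part~2 of that theorem gives uniqueness in $L_1[-T,T]$ (hence the solution is automatically odd), and parts~1 and~3 give the candidate solution and the criterion for it to actually solve the equation. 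But to finish you must carry out the computation: evaluate the fractional derivatives in \eqref{MRS:eq:solieq1} for this $f$, verify the two Abel-equation conditions \eqref{MRS:eq:ID1}--\eqref{MRS:eq:ID2}, and check that the resulting function, with its $|u|^{1-2H}$ singularity at the interior jump and $\operatorname{dist}^{\frac12-H}$ singularities at the endpoints, lies in $L_2$ --- which is where $H<\tfrac34$ is genuinely used. None of these steps is done in the proposal, so the existence claim remains unproved. (For what it is worth, the paper does not print a proof of this proposition either; Theorem~\ref{MRS:lemma:soltoiepk} is the tool it supplies for it, so fleshing out your second route is the intended argument.)
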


\begin{corollary}
If $\frac12 < H < \frac34$, then
the random process $\widetilde B^H$ satisfies
Theorems \ref{MRS:th-L}, \ref{MRS:thm:consTL2}, and \ref{MRS:thm:thaxL2}.
As the result, $L(\theta)$ defined in \eqref{MRS:eq-L}
is the likelihood function
in the model \eqref{MRS:model-linear-FBM},
and $\hat\theta_T$ defined in \eqref{MRS:eq:defthetaC}
is the MLE.
The estimator is $L_2$-consistent and strongly consistent.
For fixed $T$, it can be approximated by discrete-sample estimator
in mean-square sense.
\end{corollary}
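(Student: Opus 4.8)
The plan is to check that the subfractional Brownian motion $\widetilde B^H$ with $\frac12<H<\frac34$ satisfies assumptions $(A)$–$(C)$ for every $T>0$, together with the limit condition \eqref{MRS:eq:ccondli0}; the three cited theorems then apply directly and yield every assertion of the corollary. For assumption $(A)$ one takes the operator $\widetilde\Gamma_H$ defined in \eqref{MRS:eq:oper-sfbm} with kernel $K_H$ from \eqref{MRS:eq:K-subfr}. Since $K_H(s,t)=K_H(t,s)$ the operator is symmetric, and because $2H-2\in(-1,-\tfrac12)$ the kernel is weakly singular: the convolution part $\absMRS{t-s}^{2H-2}$ is integrable on $[-T,T]$ and hence defines a bounded (indeed compact) operator on $L_2[0,T]$ by Young's inequality, while $\absMRS{t+s}^{2H-2}$ is bounded on the region bounded away from the origin and is handled likewise; thus $\widetilde\Gamma_H$ is a bounded self-adjoint operator on $L_2[0,T]$. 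Integrating the identity $K_H=\partial^2\covMRS(\widetilde B^H_s,\widetilde B^H_t)/\partial t\,\partial s$ twice and using $\covMRS(\widetilde B^H_0,\widetilde B^H_t)=0$ recovers $\int_0^t\widetilde\Gamma_H\indicatorMRS_{[0,s]}(u)\,du=\covMRS(\widetilde B^H_s,\widetilde B^H_t)=\MEMRS\widetilde B^H_s\widetilde B^H_t$, which is \eqref{MRS:eq:ass10st}.

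For assumptions $(B)$ and $(C)$ note that here $G(t)=t$, so $g\equiv1\in L_2[0,T]$ and $(B)$ holds trivially. Assumption $(C)$ asks for $h_T\in L_2[0,T]$ with $\widetilde\Gamma_H h_T=\indicatorMRS_{[0,T]}$, that is, the first-kind Fredholm equation \eqref{MRS:eq:ieqK}; its solvability (and uniqueness) for $\frac12<H<\frac34$ is exactly Proposition~\ref{MRS:prop-sfbm-2}. Hence $(A)$–$(C)$ are in force for all $T>0$, and Theorem~\ref{MRS:th-L} shows that $L(\theta)$ from \eqref{MRS:eq-L} may be chosen as the likelihood function in the model \eqref{MRS:model-linear-FBM} and that $\hat\theta_T$ from \eqref{MRS:eq:defthetaC} is the MLE, unbiased and normally distributed.

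It remains to verify \eqref{MRS:eq:ccondli0} and to invoke the last two theorems. Since $\varMRS\widetilde B^H_t=(2-2^{2H-1})t^{2H}$ and $G(t)^2=t^2$, one has $\varMRS\widetilde B^H_t/G(t)^2=(2-2^{2H-1})t^{2H-2}\to0$ as $t\to\infty$ because $2H-2<0$; in particular the $\liminf$ in \eqref{MRS:eq:ccondli0} vanishes, so Theorem~\ref{MRS:thm:consTL2} gives the $L_2$- and strong consistency of $\hat\theta_T$. Finally, the assumptions of Theorem~\ref{MRS:th-L} having already been established, Theorem~\ref{MRS:thm:thaxL2} applies to the estimators $\hat\theta^{(N)}$ built on the uniform grid $\setMRS{Tk/N}$ and gives their mean-square convergence to $\hat\theta_T$ (and almost sure convergence along $N=2^n$), which is the last claim.

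I expect the only genuinely nontrivial ingredient to be the solvability of the integral equation \eqref{MRS:eq:ieqK}, i.e.\ Proposition~\ref{MRS:prop-sfbm-2}, which is proved separately; once it is available the present corollary reduces to the routine bookkeeping above. The one technical point worth stating with care is that for $\frac12<H<\frac34$ the kernel $K_H$ is \emph{not} square-integrable on $[0,T]^2$ (that would require $H>\tfrac34$), so the boundedness and self-adjointness of $\widetilde\Gamma_H$ on $L_2[0,T]$ must be argued through the weak-singularity estimate rather than through the Hilbert–Schmidt property.
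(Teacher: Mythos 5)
Your proposal is correct and follows essentially the same route as the paper: assumption $(A)$ via the operator \eqref{MRS:eq:oper-sfbm}, $(B)$ trivially since $g\equiv 1$, $(C)$ via Proposition~\ref{MRS:prop-sfbm-2}, and condition \eqref{MRS:eq:ccondli0} from $\varMRS\widetilde B^H_t=(2-2^{2H-1})t^{2H}$, after which the three theorems give all the stated conclusions. Your closing remark that $K_H$ fails to be Hilbert--Schmidt for $H<\tfrac34$ and that boundedness must come from the weak singularity (e.g.\ via $|t+s|^{2H-2}\le|t-s|^{2H-2}$ for $s,t\ge0$ and a Schur-type test) is a correct and worthwhile precision.
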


\subsection{The model with two independent fractional Brownian motions}
\label{MRS:sscec:2fBm}

Consider the following model:
\begin{equation}\label{MRS:eq:2fbm}
X_t = \theta t + B^{H_1}_t + B^{H_2}_t,
\end{equation}
where $B^{H_1}$ and $B^{H_2}$ are two independent fractional Brownian motion with Hurst indices
$H_1,H_2\in(\frac12,1)$.
Obviously, the condition~\eqref{MRS:eq:condconssi} is satisfied:
\[
\frac{\varMRS\left(B^{H_1}_t + B^{H_2}_t\right)}{t^2} =\frac{t^{2H_1}
+ t^{2H_2}}{t^2} \to 0, \quad t\to\infty.
\]
\begin{theorem}
Under condition $t_N\to +\infty$ as $N\to\infty$,
the estimator $\hat\theta^{(N)}$ in the model \eqref{MRS:eq:2fbm} is $L_2$-consistent and
strongly consistent.
\end{theorem}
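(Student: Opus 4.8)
The plan is to check that the noise $B_t:=B^{H_1}_t+B^{H_2}_t$ fits the setting of Section~\ref{MRS:sec:discr} and then to invoke Theorem~\ref{MRS:thm:consdiscrL2}. Being a sum of two independent centered Gaussian processes that vanish at the origin, the process $B$ is itself centered Gaussian with $B_0=0$ and with a known covariance function, namely the sum of the two fractional covariances. Here $G(t)=t$, so $g\equiv 1\in L_1[0,t]$ for every $t>0$, and $G(t_k)=t_k>0$ for every $k$, so the standing assumption of Section~\ref{MRS:sec:discr} (that $G(t_k)\neq 0$ for at least one $k$) holds. It remains to verify (i) that the finite-dimensional distributions of $\{B_t,\ t>0\}$ are non-degenerate, and (ii) condition~\eqref{MRS:eq:condconssi}.

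For (i), fix $0<t_1<\dots<t_N$ and let $\Gamma^{(N)}$, $\Gamma_1^{(N)}$, $\Gamma_2^{(N)}$ denote the covariance matrices of the increment vectors $\Delta B^{(N)}$, $\Delta B^{H_1,(N)}$, $\Delta B^{H_2,(N)}$ respectively. By independence of $B^{H_1}$ and $B^{H_2}$ one has $\Gamma^{(N)}=\Gamma_1^{(N)}+\Gamma_2^{(N)}$. Both summands are positive semi-definite, and each of them is in fact positive definite, since the increment covariance matrix of a single fractional Brownian motion is non-singular (equivalently, the finite slices of an fBm are non-degenerate, as recalled earlier for the model with a single fBm). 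Hence $\Gamma^{(N)}$ is positive definite, i.e.\ non-singular, which is precisely the non-degeneracy required in Section~\ref{MRS:sec:discr}. This is the only point of the argument requiring a line of reasoning, and I expect no genuine obstacle here: once the non-degeneracy of a single fBm is taken for granted, positive definiteness of the sum is immediate.

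For (ii), independence gives $\varMRS B_t=\varMRS B^{H_1}_t+\varMRS B^{H_2}_t=t^{2H_1}+t^{2H_2}$, so with $G(t)=t$ we get
\[
\frac{\varMRS B_t}{G^2(t)}=t^{2H_1-2}+t^{2H_2-2}\longrightarrow 0,\qquad t\to\infty,
\]
since $H_1,H_2<1$; this is exactly the computation displayed just before the statement. Combining (i)–(ii) with the hypothesis $t_N\to\infty$, all assumptions of Theorem~\ref{MRS:thm:consdiscrL2} are met, and its conclusion yields that $\hat\theta^{(N)}$ is unbiased, Gaussian, $L_2(\Omega)$-consistent and strongly consistent, as claimed. (Observe that only $H_1,H_2<1$ was used, so the same argument in fact covers all $H_1,H_2\in(0,1)$.)
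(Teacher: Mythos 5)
Your proof is correct and follows essentially the same route as the paper: verify condition \eqref{MRS:eq:condconssi} via $\varMRS B_t = t^{2H_1}+t^{2H_2}$ and invoke Theorem~\ref{MRS:thm:consdiscrL2}. The only difference is that you make explicit the non-degeneracy of the finite-dimensional distributions (positive definiteness of $\Gamma^{(N)}$ as a sum of positive definite matrices), which the paper leaves as part of its standing assumptions; that is a reasonable and correct addition.
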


Evidently, the corresponding operator $\Gamma$ for the model~\eqref{MRS:eq:2fbm} equals $\Gamma_{H_1}+\Gamma_{H_2}$, where $\Gamma_H$ is defined by \eqref{MRS:eq:Gamma_H}.
Therefore, in order to verify the assumptions of Theorem~\ref{MRS:th-L} we need to show that there exists a function $h_T$ such that
$\left(\Gamma_{H_1}+\Gamma_{H_2}\right) h_T = \indicatorMRS_{[0,T]}$.
This is equivalent to
$\left(I+\Gamma_{H_1}^{-1}\Gamma_{H_2}^{}\right) h_T = \Gamma_{H_1}^{-1}\indicatorMRS_{[0,T]}$,
since the operator $\Gamma_{H_1}$ is injective and its range contains $\indicatorMRS_{[0,T]}$,
see \eqref{MRS:eq:GHhT1} and Theorem~\ref{MRS:lemma:soltoiepk}.
% invertible, see \cite{MRS:AJS,nvv}.
Hence, it suffices to prove that the operator
$I+\Gamma_{H_1}^{-1}\Gamma_{H_2}$ is
invertible. This is done in Theorem~\ref{MRS:th:boundedness} in
Appendix~\ref{MRS:th:boundedness} for $H_1\in(1/2,3/4]$,
$H_2\in(H_1,1)$. Thus, in this case the assumptions of
Theorem~\ref{MRS:th-L} hold with
\[
h_T =  \left(I+\Gamma_{H_1}^{-1}\Gamma_{H_2}\right)^{-1}\Gamma_{H_1}^{-1}\indicatorMRS_{[0,T]}.
\]
Therefore, we have the following result for the estimator
\[
\hat\theta_T = \frac
{\int_0^T h_T(s) \, dX_s}
{\int_0^T h_T(s) \, ds} .
\]
\begin{theorem}
If $H_1\in(1/2,3/4]$ and $H_2\in(H_1,1)$, then
the random process $B^{H_1} + B^{H_2}$ satisfies
Theorems \ref{MRS:th-L}, \ref{MRS:thm:consTL2}, and \ref{MRS:thm:thaxL2}.
As the result, $L(\theta)$ defined in \eqref{MRS:eq-L}
is the likelihood function
in the model \eqref{MRS:eq:2fbm},
and $\hat\theta_T$
is the maximum likelihood estimator.
The estimator is $L_2$-consistent and strongly consistent.
For fixed $T$, it can be approximated by discrete-sample estimator
in mean-square sense.
\end{theorem}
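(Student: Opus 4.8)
The plan is to verify, for every $T>0$, the structural assumptions $(A)$--$(C)$ of Section~\ref{MRS:sec:cont} together with the growth condition \eqref{MRS:eq:ccondli0}, and then to invoke Theorems~\ref{MRS:th-L}, \ref{MRS:thm:consTL2} and \ref{MRS:thm:thaxL2}, which between them yield every assertion of the statement.

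First I would dispose of the routine assumptions. Since $B^{H_1}$ and $B^{H_2}$ are independent centred Gaussian processes and the drift $\theta t$ is deterministic, $\covMRS(X_s,X_t)=\MEMRS B^{H_1}_sB^{H_1}_t+\MEMRS B^{H_2}_sB^{H_2}_t$; as each $\Gamma_{H_i}$ satisfies \eqref{MRS:eq:ass10st} for the corresponding fractional Brownian motion, the operator $\Gamma:=\Gamma_{H_1}+\Gamma_{H_2}$, self-adjoint as a sum of self-adjoint operators, satisfies \eqref{MRS:eq:ass10st} for $B^{H_1}+B^{H_2}$, which is $(A)$. Assumption $(B)$ is immediate since $G(t)=t$, so $g\equiv1\in L_2[0,T]$. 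The growth condition \eqref{MRS:eq:ccondli0} is also elementary: $\varMRS(B^{H_1}_t+B^{H_2}_t)/G(t)^2=(t^{2H_1}+t^{2H_2})/t^2\to0$ as $t\to\infty$ because $H_1,H_2<1$, so the $\liminf$ is in fact a genuine limit equal to $0$.

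The substantive step is $(C)$: producing $h_T\in L_2[0,T]$ with $(\Gamma_{H_1}+\Gamma_{H_2})h_T=\indicatorMRS_{[0,T]}$. I would argue exactly as in the discussion preceding the statement. By \eqref{MRS:eq:GHhT1}, $\indicatorMRS_{[0,T]}$ lies in the range of the injective operator $\Gamma_{H_1}$, with $(\Gamma_{H_1}^{-1}\indicatorMRS_{[0,T]})(s)=C_{H_1}\,s^{1/2-H_1}(T-s)^{1/2-H_1}$, which belongs to $L_2[0,T]$ since the exponent $1/2-H_1>-1/2$. Hence, using injectivity of $\Gamma_{H_1}$ and Theorem~\ref{MRS:lemma:soltoiepk}, the equation $(\Gamma_{H_1}+\Gamma_{H_2})h_T=\indicatorMRS_{[0,T]}$ is equivalent to $(I+\Gamma_{H_1}^{-1}\Gamma_{H_2})h_T=\Gamma_{H_1}^{-1}\indicatorMRS_{[0,T]}$. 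It therefore suffices that $I+\Gamma_{H_1}^{-1}\Gamma_{H_2}$ be (well defined and) boundedly invertible on $L_2[0,T]$, which is precisely the content of Theorem~\ref{MRS:th:boundedness} for $H_1\in(1/2,3/4]$, $H_2\in(H_1,1)$. Granting it, $h_T=(I+\Gamma_{H_1}^{-1}\Gamma_{H_2})^{-1}\Gamma_{H_1}^{-1}\indicatorMRS_{[0,T]}\in L_2[0,T]$ solves the equation, so $(C)$ holds.

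With $(A)$--$(C)$ and \eqref{MRS:eq:ccondli0} in hand for all $T>0$, the conclusions follow by assembly: Theorem~\ref{MRS:th-L} identifies $L(\theta)$ in \eqref{MRS:eq-L} as a likelihood function and $\hat\theta_T$ as the (unbiased, Gaussian) MLE; Theorem~\ref{MRS:thm:consTL2} gives $L_2$- and strong consistency; and Theorem~\ref{MRS:thm:thaxL2} gives mean-square convergence of the discrete-sample estimators $\hat\theta^{(N)}$ built from $X_{Tk/N}$ to $\hat\theta_T$, together with almost sure convergence along $N=2^n$. The only genuine obstacle is the invertibility of $I+\Gamma_{H_1}^{-1}\Gamma_{H_2}$ --- this is where the restriction $H_1\le3/4$ enters, everything else being bookkeeping on top of the general theorems --- so I would isolate that operator-theoretic claim in an appendix, as the authors do, and regard it as the crux of the argument.
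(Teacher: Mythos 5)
Your proposal is correct and follows essentially the same route as the paper: verify assumptions $(A)$--$(B)$ and condition \eqref{MRS:eq:ccondli0} directly, reduce $(C)$ to the invertibility of $I+\Gamma_{H_1}^{-1}\Gamma_{H_2}$ using \eqref{MRS:eq:GHhT1} and the injectivity of $\Gamma_{H_1}$, and then invoke Theorem~\ref{MRS:th:boundedness} together with Theorems~\ref{MRS:th-L}, \ref{MRS:thm:consTL2} and \ref{MRS:thm:thaxL2}. You correctly identify the invertibility of $I+\Gamma_{H_1}^{-1}\Gamma_{H_2}$ as the sole substantive step, which is exactly where the paper places the weight of the argument.
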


\begin{remark}\label{MRS:rem:mish-vor}
Another approach to the drift parameter estimation in the model with two fractional Brownian motions was proposed in \cite{MRS:MiSumfbms1} and developed in \cite{MRS:MishuraVoronov}.
It is based on the solving of the following Fredholm integral equation of the second kind
\begin{equation}\label{MRS:integralequation}
(2-2H_1) \tilde h_T(u)u^{1-2H_1}
+\int_0^T \tilde h_T(s) k(s,u)\,ds
= (2-2H_1)  u^{1-2H_1}, \quad u\in(0,T],
\end{equation}
where
\begin{gather*}
    k(s,u) =\int_0^{s\wedge u}\partial_s K_{H_1,H_2}(s,v) \partial_u K_{H_1,H_2}(u,v)\,dv,
    \\
    K_{H_1,H_2}(t,s) = c_{H_1} \beta_{H_2} s^{1/2-H_2}\int_{s}^{t}(t-u)^{1/2-H_1}u^{H_2-H_1}(u-s)^{H_2-3/2}du,
\\
c_{H_1}=\left(\frac{\Gamma(3-2H_1)}{2H_1\Gamma(\frac32-H_1)^{3}\Gamma(H_1+\frac12)}\right)^{\frac{1}{2}}\!,
\; \beta_{H_2}= \left(\frac{2H_2\left(H_2-\frac12\right)^2\Gamma(\frac32-H_2)}{\Gamma(H_2+\frac12)\Gamma(2-2H_2)}\right)^{\frac12}.
    \end{gather*}
Then for $1/2\le H_1<H_2<1$ the estimator is defined as
\begin{equation*}%\label{MRS:eq:2.26}
\hat{\theta}(T)=\frac{N(T)}{\delta_{H_1}\langle N\rangle(T)},
\end{equation*}
where
$\delta_{H_1}=c_{H_1}\BetaMRS\left(\frac32-H_1,\frac32-H_1\right)$,
$N(t)$
is a square integrable Gaussian martingale,
\[
N(T)=\int_0^{T}\tilde h_{T}(t)\,dX(t),
\]
$\tilde h_{T}(t)$ is a unique solution to \eqref{MRS:integralequation} and \begin{equation*}%\label{MRS:eq:<N>}
\langle N\rangle(T)=(2-2H_1) \int_0^{T} \tilde  h_{T}(t)t^{1-2H_1}\,dt.
\end{equation*}
This estimator is also unbiased, normal and strongly consistent.
The details of this method can be found also in \cite[Sec.~5.5]{MRS:KMR-book}.
\end{remark}

\appendix

\section{Integral equation with power kernel}
\label{MRS:sec:appie}
\begin{theorem}\label{MRS:lemma:soltoiepk}
Let $0< p < 1$ and $b>0$.
\begin{enumerate}
\item
If $y \in L_1[0,b]$ is a solution to integral equation
\begin{equation}
\label{MRS:eq:ieq1}
\int_0^b \frac{y(s)\, ds}{|t-s|^p} = f(t)
\quad \mbox{for almost all $t\in(0,b)$,}
\end{equation}
then $y(x)$ satisfies
\begin{equation}\label{MRS:eq:solieq1}
y(x) = \frac{\Gamma(p) \cos\frac{\pi p}{2}}
{\pi x^{(1-p)/2}}
\mathcal{D}^{(1-p)/2}_{b-}
\left(
x^{1-p}
\mathcal{D}^{(1-p)/2}_{0+}
\left( \frac{f(x)}{x^{(1-p)/2}} \right)
\right)
\end{equation}
almost everywhere on $[0,b]$,
where $\mathcal{D}^{\alpha}_{a+}$ and $\mathcal{D}^{\alpha}_{b-}$
are the Riemann--Liouville fractional derivatives, that is
\begin{align*}
\mathcal{D}^{\alpha}_{a+} f(x) &=
\frac{1}{\Gamma(1-\alpha)} \frac{d}{dx} \biggl(\int_a^x \frac{f(t)}{(x-t)^\alpha} \, dt\biggr),
\\
\mathcal{D}^{\alpha}_{b-} f(x) &=
\frac{-1}{\Gamma(1-\alpha)} \frac{d}{dx} \biggl(\int^b_x \frac{f(t)}{(t-x)^\alpha} \, dt\biggr).
\end{align*}

\item
If $y_1 \in L_1[0,b]$ and $y_2 \in L_1[0,b]$ are two solutions
to integral equation \eqref{MRS:eq:ieq1},
then $y_1(x) = y_2(x)$
almost everywhere on $[0,b]$.
\item
If $y \in L_1[0,b]$ satisfies \eqref{MRS:eq:solieq1}
almost everywhere on $[0,b]$
and the fractional derivatives are solutions to respective Abel integral equations,
that is
\begin{equation}\label{MRS:eq:ID1}
\frac{1}{\Gamma\left(\frac{1-p}{2}\right)}
\int_0^t
\frac{\mathcal{D}^{(1-p)/2}_{0+}
(f(x)  x^{(p-1)/2})}
{(t-x)^{(p+1)/2}} \, dx =
\frac{f(t)}{t^{(1-p)/2}},
\end{equation}
for almost all $t \in (0,b)$ and
\begin{equation}\label{MRS:eq:ID2}
\frac{1}{\Gamma\left(\frac{1-p}{2}\right)}
\int_x^b
\frac{\pi y(s) s^{(1-p)/2}}{\Gamma(p) \cos\frac{\pi p}{2}}\,
\frac{ds}{(s-x)^{(p+1)/2}}
=
x^{1-p}
\mathcal{D}^{(1-p)/2}_{0+}
\left( \frac{f(x)}{x^{(1-p)/2}} \right)
\end{equation}
for almost all $x \in (0,b)$,
then $y(s)$ is a solution to integral equation \eqref{MRS:eq:ieq1}.
\end{enumerate}
\end{theorem}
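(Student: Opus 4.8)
The plan is to identify the finite-interval potential $y\mapsto\int_0^b|t-s|^{-p}y(s)\,ds$ with a composition of one-sided Riemann--Liouville fractional integrals carrying power weights, and then to invoke the standard inversion theory of such integrals on $L_1[0,b]$. Set $\beta=(1-p)/2\in(0,\tfrac12)$ and write $I^\beta_{0+}$, $I^\beta_{b-}$ for the left- and right-sided fractional integrals of order $\beta$, so that $\mathcal D^\beta_{0+}$, $\mathcal D^\beta_{b-}$ are their left inverses on $L_1[0,b]$. The first and principal step is the factorisation identity
\[
\int_0^b \frac{y(s)\,ds}{|t-s|^p}
=\frac{\pi}{\Gamma(p)\cos\frac{\pi p}{2}}\;
t^{\beta}\,\Bigl(I^{\beta}_{0+}\bigl[\,u^{-2\beta}\,\bigl(I^{\beta}_{b-}[\,v^{\beta}y(v)\,]\bigr)(u)\,\bigr]\Bigr)(t),
\qquad\text{a.a. }t\in(0,b),
\]
which I would prove by inserting the definitions of the two inner integrals, applying Fubini, and reducing the claim to the one-variable evaluation
\[
\int_0^{t\wedge v} u^{p-1}(t-u)^{\beta-1}(v-u)^{\beta-1}\,du
=\mathrm{B}(p,\beta)\,(tv)^{-\beta}\,|t-v|^{-p}.
\]
After the substitution $u=(t\wedge v)w$ the left-hand side becomes $\mathrm{B}(p,\beta)\,{}_2F_1\!\bigl(1-\beta,p;p+\beta;\tfrac{t\wedge v}{t\vee v}\bigr)$ times an explicit power, and Euler's transformation changes the parameter $1-\beta$ into $(p+\beta)-(1-\beta)=0$, so the hypergeometric series reduces to $1$; the resulting constant $\mathrm{B}(p,\beta)/\Gamma(\beta)^2=\Gamma(p)/(\Gamma(\beta)\Gamma(p+\beta))$ matches $\Gamma(p)\cos\frac{\pi p}{2}/\pi$ via the reflection formula for $\Gamma(\tfrac{1-p}{2})\Gamma(\tfrac{1+p}{2})$. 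The use of Fubini is legitimate for a.a.\ $t$ because for $y\in L_1$ the absolute double integral equals a constant times $\int_0^b|y(s)|\,|t-s|^{-p}\,ds$, which is finite a.e.; the same estimate shows $u^{-2\beta}\bigl(I^\beta_{b-}[v^\beta y]\bigr)(u)\in L_1[0,b]$.

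Granting the factorisation, part~(1) follows by inverting the composition on $L_1$. Put $\varphi(u)=u^{-2\beta}\bigl(I^\beta_{b-}[v^\beta y]\bigr)(u)$; the identity reads $t^{-\beta}f(t)=(\mathrm{const})\,(I^\beta_{0+}\varphi)(t)$, so applying $\mathcal D^\beta_{0+}$ to $t^{-\beta}f$ recovers $\varphi$ up to that constant, whence $u^{2\beta}\varphi(u)=\bigl(I^\beta_{b-}[v^\beta y]\bigr)(u)$ is a constant multiple of $u^{2\beta}\mathcal D^\beta_{0+}\bigl(x^{-\beta}f\bigr)(u)$; applying $\mathcal D^\beta_{b-}$ and dividing by $v^\beta$ gives exactly \eqref{MRS:eq:solieq1}, the constant being forced to $\Gamma(p)\cos\frac{\pi p}{2}/\pi$. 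Part~(2) is then immediate, since the right-hand side of \eqref{MRS:eq:solieq1} is a functional of $f$ alone; equivalently, a solution of the homogeneous equation $\int_0^b|t-s|^{-p}y(s)\,ds=0$ satisfies $I^\beta_{0+}\bigl[u^{-2\beta}I^\beta_{b-}(v^\beta y)\bigr]=0$ and the injectivity of the fractional integrals forces $y=0$ a.e.

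For part~(3) the same chain is run backwards, and this is exactly where the hypotheses \eqref{MRS:eq:ID1}--\eqref{MRS:eq:ID2} are used. Rewrite \eqref{MRS:eq:solieq1} as $(\mathrm{const})\,x^\beta y(x)=\mathcal D^\beta_{b-}\bigl(x^{2\beta}\mathcal D^\beta_{0+}(x^{-\beta}f)\bigr)$. Condition \eqref{MRS:eq:ID2} says precisely that $x^{2\beta}\mathcal D^\beta_{0+}(x^{-\beta}f)$ is a constant multiple of $I^\beta_{b-}[s^\beta y]$, i.e.\ it lies in the range of $I^\beta_{b-}$; hence applying $I^\beta_{b-}$ genuinely undoes $\mathcal D^\beta_{b-}$ and yields $I^\beta_{b-}[s^\beta y]$ equal, up to a constant, to $x^{2\beta}\mathcal D^\beta_{0+}(x^{-\beta}f)$. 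Dividing by $x^{2\beta}$ and applying $I^\beta_{0+}$, condition \eqref{MRS:eq:ID1} guarantees $I^\beta_{0+}\mathcal D^\beta_{0+}(x^{-\beta}f)=x^{-\beta}f$, so one reaches $t^\beta\bigl(I^\beta_{0+}[u^{-2\beta}I^\beta_{b-}(s^\beta y)]\bigr)(t)=(\mathrm{const})\,f(t)$, which by the factorisation identity is $\int_0^b|t-s|^{-p}y(s)\,ds=f(t)$ once the constants are matched. The main obstacle throughout is this bookkeeping of function spaces: the relations $\mathcal D^\beta I^\beta=\mathrm{id}$ and $I^\beta\mathcal D^\beta=\mathrm{id}$ hold only on suitable classes, so for parts~(1)--(2) one must verify that the iterated potentials and intermediate functions remain in $L_1[0,b]$ (supplied by the estimate above), while for part~(3) the Abel-equation conditions \eqref{MRS:eq:ID1}--\eqref{MRS:eq:ID2} are inserted precisely to license the two backward steps.
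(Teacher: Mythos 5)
Your proposal is correct and follows essentially the same route as the paper: the same factorisation of the kernel $|t-s|^{-p}$ through the weighted composition $t^{\beta}I^{\beta}_{0+}\bigl[u^{-2\beta}I^{\beta}_{b-}(v^{\beta}y)\bigr]$ with $\beta=(1-p)/2$, the same $L_1$-integrability check for the intermediate function, the same appeal to uniqueness of $L_1$-solutions of the Abel equation for parts (1)--(2), and the same reversal of the chain under hypotheses \eqref{MRS:eq:ID1}--\eqref{MRS:eq:ID2} for part (3). The only cosmetic difference is that you derive the key beta-type kernel identity directly via a hypergeometric (Euler transformation) reduction, whereas the paper imports it from the cited lemma of Norros--Valkeila--Virtamo.
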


\begin{proof}
Firstly, transform the left-hand side of
\eqref{MRS:eq:ieq1}.
By \cite[Lemma 2.2(i)]{MRS:nvv}, for $0 < s < t$
\[
\int_0^s \frac{d\tau}{
(t-\tau)^{(p+1)/2}
(s-\tau)^{(p+1)/2}
\tau^{1-p} }
= \frac{\BetaMRS\left(p, \: \frac{1-p}{2}\right)}
{s^{(1-p)/2}
t^{(1-p)/2}
(t-s)^p} .
\]
Hence, for $s>0$, $t>0$, $s\neq t$
\[
\int_0^{\min (s,t)} \frac{d\tau}{
(t-\tau)^{(p+1)/2}
(s-\tau)^{(p+1)/2}
\tau^{1-p} }
= \frac{\BetaMRS\left(p, \: \frac{1-p}{2}\right)}
{s^{(1-p)/2}
t^{(1-p)/2}
|t-s|^p} .
\]
Hence
\begin{multline*}
\int_0^b \frac {y(s)\, ds}{|t-s|^p}
= \\ =
\int_0^b \frac{
s^{(1-p)/2}
t^{(1-p)/2}
y(s)}
{\BetaMRS\left(p, \: \frac{1-p}{2}\right)}
\int_0^{\min(s,t)}
\frac{d\tau}{
(t-\tau)^{(p+1)/2}
(s-\tau)^{(p+1)/2}
\tau^{1-p} }
\, ds .
\end{multline*}
Change the order of integration, noting that
$\{(s, \tau) :\allowbreak 0 < s < b,\allowbreak \; 0 < \tau < \min(s, t)\} =
\allowbreak
\{(s, \tau) :\allowbreak 0 < \tau < t,\allowbreak \; \tau < s < b\}$
for $0<t<b$:
\begin{equation}\label{MRS:eq:int2D}
\int_0^b \frac {y(s)\, ds}{|t-s|^p}
=
\frac{t^{(1-p)/2}}
{\BetaMRS\left(p, \: \frac{1-p}{2}\right)}
\int_0^t \frac{1}
{(t-\tau)^{(p+1)/2} \tau^{1-p}}
\int_{\tau}^b
\frac{
s^{(1-p)/2}
y(s) \, ds}
{(s-\tau)^{(p+1)/2} }
\, d\tau.
\end{equation}
The right-hand side of \eqref{MRS:eq:int2D} can be rewritten with
fractional integration:
\[
\int_0^b \frac {y(s)\, ds}{|x-s|^p}
=
\frac{\Gamma\left(\frac{1-p}{2}\right)^2}
{\BetaMRS\left(p, \: \frac{1-p}{2}\right)}
x^{(1-p)/2}
I^{(1-p)/2}_{0+} \left(
\frac{1}{x^{1-p}}
I^{(1-p)/2}_{b-}
( x^{(1-p)/2} y(x) )
\right)
\]
for $0 < x < b$, where $I^\alpha_{a+}$ and $I^\alpha_{b-}$
are fractional integrals
\[
I^{\alpha}_{a+} f(x) =
\frac{1}{\Gamma(\alpha)}  \int_a^x \frac{f(t)}{(x-t)^{(1-\alpha)}} \, dt
\]
and
\[
I^{\alpha}_{b-} f(x) =
\frac{1}{\Gamma(\alpha)} \int^b_x \frac{f(t)}{(t-x)^{(1-\alpha)}} \, dt.
\]
The constant coefficient can be simplified:
\[
\frac{\Gamma\left(\frac{1-p}{2}\right)^2}
{\BetaMRS\left(p, \: \frac{1-p}{2}\right)}
=
\frac{\Gamma\left(\frac{1-p}{2}\right) \Gamma\left(\frac{p+1}{2}\right)}
{\Gamma(p)}
=
\frac{\pi}
{\Gamma(p) \cos\left(\frac{\pi p}{2}\right)} .
\]

Thus integral equation \eqref{MRS:eq:ieq1} can be rewritten with use of fractional integrals:
\begin{equation}\label{MRS:eq:ieq1fi}
\frac{\pi}
{\Gamma(p) \cos\left(\frac{\pi p}{2}\right)}
x^{(1-p)/2}
I^{(1-p)/2}_{0+} \left(
\frac{1}{x^{1-p}}
I^{(1-p)/2}_{b-}
( x^{(1-p)/2} y(x) )
\right)
= f(x)
\end{equation}
for almost all $x \in (0,b)$.

Whenever $y \in L_1[0,b]$, the function $x^{(1-p)/2} y(x)$
is obviously integrable on $[0,b]$.
Now prove that the function
$x^{p-1} I^{(1-p)/2}_{b-}
( x^{(1-p)/2} y(x) )$
is also integrable on $[0,b]$.
Indeed,
\begin{gather*}
\left|I^{(1-p)/2}_{b-}
( x^{(1-p)/2} y(x) )\right| \le
\frac{1}{\Gamma\left(\frac{1-p}{2}\right)}
\int^b_x \frac{t^{(1-p)/2} \, |y(t)|}{(t-x)^{(p+1)/2}} \, dt ,
\\
\begin{aligned}
&\int_0^b \left|
\frac{I^{(1-p)/2}_{b-}
( x^{(1-p)/2} y(x) )}
{x^{1-p}}
\right| dx
\le
\frac{1}{\Gamma\left(\frac{1-p}{2}\right)}
\int_0^b \frac{1}{x^{1-p}}
\int^b_x \frac{t^{(1-p)/2} \, |y(t)|\, dt}{(t-x)^{(p+1)/2}}
\,dx
\\
&\qquad=
\frac{1}{\Gamma\left(\frac{1-p}{2}\right)}
\int^b_0 t^{(1-p)/2} \, |y(t)|\,
\int_0^t \frac{dx}{x^{1-p} (t-x)^{(p+1)/2}}
\, dt
\\
&\qquad=
\frac{\BetaMRS\left(p,\: \frac{1-p}{2}\right)}
{\Gamma\left(\frac{1-p}{2}\right)}
\int^b_0  |y(t)|\, dt
< \infty ,
\end{aligned}
\end{gather*}
and the integrability is proved.

Due to \cite[Theorem 2.1]{MRS:samko1993fractional},
the Abel integral equation
$f(x) = I^\alpha_{a+} \phi(x)$, $x \in (a,b)$,
may have not more that one solution
$\phi(x)$ within $L_1[a,b]$.
If the equation has such a solution, then the solution
$\phi(x)$ is equal to $\mathcal{D}^\alpha_{a+} f(x)$.
Similarly,
the Abel integral equation
$f(x) = I^\alpha_{b-} \phi(x)$
may have not more that one solution
$\phi(x) \in L_1[a,b]$,
and if it exists, $\phi = \mathcal{D}^\alpha_{b-} f$.

Therefore, if $y\in L_1[0,b]$ is a solution to integral equation,
then is also satisfies \eqref{MRS:eq:ieq1fi}, so
\begin{gather}
I^{(1-p)/2}_{0+} \left(
\frac{1}{x^{1-p}}
I^{(1-p)/2}_{b-}
\left(
\frac{\pi}
{\Gamma(p) \cos\left(\frac{\pi p}{2}\right)}
 x^{(1-p)/2} y(x) \right)
\right)
= \frac{f(x)}{x^{(1-p)/2}},
\label{MRS:eq:rewreq2}\\
\frac{1}{x^{1-p}}
I^{(1-p)/2}_{b-}
\left(
\frac{\pi  x^{(1-p)/2} y(x)}
{\Gamma(p) \cos\left(\frac{\pi p}{2}\right)}
 \right)
=
\mathcal{D}^{(1-p)/2}_{0+}
\left(\frac{f(x)}{x^{(1-p)/2}}\right),
\nonumber\\
%   I^{(1-p)/2}_{b-}
%   \left(
%   \frac{\pi  x^{(1-p)/2} y(x)}
%   {\Gamma(p) \cos\left(\frac{\pi p}{2}\right)}
%    \right)
%   =
%   x^{1-p}
%   \mathcal{D}^{(1-p)/2}_{0+}
%   \left(\frac{f(x)}{x^{(1-p)/2}}\right),
%   \nonumber\\
\frac{\pi  x^{(1-p)/2} y(x)}
{\Gamma(p) \cos\left(\frac{\pi p}{2}\right)}
=
\mathcal{D}^{(1-p)/2}_{b-} \left(
x^{1-p}
\mathcal{D}^{(1-p)/2}_{0+}
\left(\frac{f(x)}{x^{(1-p)/2}}\right)
\right)
\nonumber
\end{gather}
 for almost all $x \in (0, b)$.
Thus $y(x)$ satisfies \eqref{MRS:eq:solieq1}.
Statement 1 of Theorem~\ref{MRS:lemma:soltoiepk} is proved,
and statement 2 follows from statement 1.

From equations \eqref{MRS:eq:ID1} and \eqref{MRS:eq:ID2},
which can be rewritten with fractional integration operator,
\begin{gather*}
I^{(1-p)/2}_{0+}
\mathcal{D}^{(1-p)/2}_{0+}
(f(x)  x^{(1-p)/2}) =
\frac{f(x)}{t^{(1-p)/2}},
\\
I^{(1-p)/2}_{b-}
\left(
\frac{\pi y(x) x^{(1-p)/2}}{\Gamma(p) \cos\frac{\pi p}{2}}
\right)
=
x^{1-p}
\mathcal{D}^{(1-p)/2}_{0+}
\left( \frac{f(x)}{x^{(1-p)/2}} \right)
\end{gather*}
\eqref{MRS:eq:rewreq2} follows, and
\eqref{MRS:eq:rewreq2} is equivalent to \eqref{MRS:eq:ieq1}.
Thus statement 3 of Theorem~\ref{MRS:lemma:soltoiepk} holds true.
\qed
\end{proof}

\begin{remark}
The integral equation \eqref{MRS:eq:ieq1} was solved explicitly in \cite[Lemma 3]{MRS:Breton98} under the assumption $f\in C([0,b])$.
Here we solve this equation in $L_1[0,b]$ and prove the uniqueness of a solution in this space.
Note also that the formula for solution in the handbook \cite[formula 3.1.30]{MRS:Polyanin_Manzhirov} is incorrect
(it is derived from the incorrect formula 3.1.32 of the same book, where an operator of differentiation is missing; this error comes from the book \cite{MRS:Zabreyko}).
\end{remark}

\section{Boundedness and invertibility of operators}\label{MRS:Apx:Boundness}

This appendix is devoted to the proof of the following result, which plays the key role in the proof of the strong consistency of the MLE for the model with two independent fractional Brownian motions.
\begin{theorem}\label{MRS:th:boundedness}
    Let $H_1\in\left(\frac12,\frac34\right]$, $H_2\in(H_1,1)$, and $\Gamma_H$ be the operator defined by \eqref{MRS:eq:Gamma_H}.
    Then  $\Gamma^{-1}_{H_1}\Gamma_{H_2}\colon L_2[0,T]\to L_2[0,T]$ is a compact linear operator
defined on the entire space $L_2[0,T]$,
and the operator $I+\Gamma_{H_1}^{-1}\Gamma_{H_2}$ is invertible.
\end{theorem}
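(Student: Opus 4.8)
I will sketch a proof; the author's actual argument is likely similar in structure.

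\medskip
\noindent\textbf{Proof proposal.}
The plan is to prove the two assertions separately, the invertibility statement being a quick consequence of the compactness. Both $\Gamma_{H_1}$ and $\Gamma_{H_2}$ are self-adjoint and positive semidefinite on $L_2[0,T]$, being covariance operators: $\langle\Gamma_H f,f\rangle=\MEMRS\bigl(\int_0^T f(s)\,dB^H_s\bigr)^2\ge0$. In fact each $\Gamma_H$ is positive definite, hence injective: extending $f\in L_2[0,T]$ by zero to $\mathbb R$ and using that the kernel $\absMRS{u}^{2H-2}$ appearing in~\eqref{MRS:eq:Gamma_H} has Fourier transform a positive constant times $\absMRS{\xi}^{1-2H}$ (legitimate since $2-2H\in(0,1)$), one gets the Plancherel identity $\langle\Gamma_H f,f\rangle=c_H\int_{\mathbb R}\absMRS{\widehat f(\xi)}^2\absMRS{\xi}^{1-2H}\,d\xi$ with $c_H>0$, so $\langle\Gamma_H f,f\rangle=0$ forces $f=0$. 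Granting that $K:=\Gamma_{H_1}^{-1}\Gamma_{H_2}$ is a compact operator defined on all of $L_2[0,T]$, invertibility of $I+K$ is then the Fredholm alternative: it suffices that $-1$ not be an eigenvalue of $K$, and $Kf=-f$ gives $(\Gamma_{H_1}+\Gamma_{H_2})f=0$, hence $\langle\Gamma_{H_1}f,f\rangle+\langle\Gamma_{H_2}f,f\rangle=0$ with both summands $\ge0$, hence $\langle\Gamma_{H_1}f,f\rangle=0$, hence $f=0$; thus $I+K$ is injective and so boundedly invertible.

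To analyse $K$, I would make the smoothing structure of $\Gamma_H$ explicit through fractional calculus. Write $\alpha_j:=H_j-\tfrac12\in(0,\tfrac12)$, $p_j:=2-2H_j$, let $\mu_\gamma$ denote multiplication by $s^\gamma$ on $[0,T]$, and let $I^{\beta}_{0+},I^{\beta}_{T-},\mathcal D^{\beta}_{0+},\mathcal D^{\beta}_{T-}$ be the Riemann--Liouville fractional integrals and derivatives. The computation in the proof of Theorem~\ref{MRS:lemma:soltoiepk} (see~\eqref{MRS:eq:int2D} and the lines following it) is precisely the factorization
\[
\Gamma_{H_j}=A_{H_j}\,\mu_{\alpha_j}\,I^{\alpha_j}_{0+}\,\mu_{-2\alpha_j}\,I^{\alpha_j}_{T-}\,\mu_{\alpha_j},\qquad A_{H_j}=\frac{H_j(2H_j-1)\,\Gamma(\alpha_j)^2}{\BetaMRS(p_j,\alpha_j)}>0,
\]
while Theorem~\ref{MRS:lemma:soltoiepk} gives, on the range of $\Gamma_{H_1}$, the inverse $\Gamma_{H_1}^{-1}=A_{H_1}^{-1}\,\mu_{-\alpha_1}\,\mathcal D^{\alpha_1}_{T-}\,\mu_{2\alpha_1}\,\mathcal D^{\alpha_1}_{0+}\,\mu_{-\alpha_1}$. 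Composing the two and collapsing $\mu_{-\alpha_1}\mu_{\alpha_2}=\mu_{\delta}$, where $\delta:=\alpha_2-\alpha_1=H_2-H_1>0$,
\[
K=\frac{A_{H_2}}{A_{H_1}}\;\mu_{-\alpha_1}\,\mathcal D^{\alpha_1}_{T-}\,\mu_{2\alpha_1}\,\bigl(\mathcal D^{\alpha_1}_{0+}\,\mu_{\delta}\,I^{\alpha_2}_{0+}\bigr)\,\mu_{-2\alpha_2}\,I^{\alpha_2}_{T-}\,\mu_{\alpha_2}.
\]
The bracketed block is where the net smoothing sits: by the composition rules for fractional integrals with power weights (Erd\'elyi--Kober operators), or by a direct computation of its kernel, $\mathcal D^{\alpha_1}_{0+}\mu_{\delta}I^{\alpha_2}_{0+}$ is a fractional-integration operator of positive order $\delta=H_2-H_1$, hence bounded and --- on a bounded interval --- compact on $L_2[0,T]$ (via the compact embedding of the fractional Sobolev space of order $\delta$ into $L_2$). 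Consequently $K$ will be compact as soon as the remaining factors combine into a bounded operator on $L_2[0,T]$, and it will then automatically be everywhere defined.

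The crux --- and the step I expect to be the main difficulty --- is exactly this boundedness: one must absorb the negative power weights $\mu_{-\alpha_1}$, $\mu_{-2\alpha_2}$ and the fractional derivative $\mathcal D^{\alpha_1}_{T-}$ into the neighbouring Riemann--Liouville operators, which comes down to a family of weighted Hardy-type inequalities (of Stein--Weiss type) for $I^{\alpha}_{0+}$ and $I^{\alpha}_{T-}$ on $L_2[0,T]$, together with a careful accounting of the singularities at $s=0$ and $s=T$ produced by the weights and the commutator remainders. This is where the hypothesis $H_1\le\tfrac34$ --- equivalently $2\alpha_1=2H_1-1\le\tfrac12$ (the condition $2\alpha_2<1$, i.e.\ $H_2<1$, being automatic) --- enters, as precisely the bound on the exponents that makes these weighted estimates valid; the range $H_2\in(H_1,1)$ is used only through $\delta>0$ and $\alpha_2<\tfrac12$. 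Once $K$ is known to be bounded, the factorization exhibits it as an everywhere-defined operator on $L_2[0,T]$, and applying $\Gamma_{H_1}$ and using $I^{\alpha}_{0+}\mathcal D^{\alpha}_{0+}=\mathrm{id}$, $I^{\alpha}_{T-}\mathcal D^{\alpha}_{T-}=\mathrm{id}$ on the pertinent classes (as in statement~3 of Theorem~\ref{MRS:lemma:soltoiepk}) yields $\Gamma_{H_1}Kf=\Gamma_{H_2}f$ for every $f\in L_2[0,T]$; this certifies that $\Gamma_{H_2}$ maps $L_2[0,T]$ into the range of $\Gamma_{H_1}$ and that $K$ really is $\Gamma_{H_1}^{-1}\Gamma_{H_2}$ on the whole space. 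A route that avoids part of the weighted analysis would be to read off the kernel of $K$ from the factorization and verify directly that it is square-integrable on $[0,T]^2$, which yields compactness of $K$ at once.
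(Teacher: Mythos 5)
Your reduction of invertibility to the Fredholm alternative is exactly the paper's argument and is fine: $-1$ cannot be an eigenvalue of $\Gamma_{H_1}^{-1}\Gamma_{H_2}$ because $\Gamma_{H_1}+\Gamma_{H_2}$ is positive definite and injective. The compactness half, however, has a genuine gap. You factor $\Gamma_{H_j}$ multiplicatively as $A_{H_j}\,\mu_{\alpha_j}I^{\alpha_j}_{0+}\mu_{-2\alpha_j}I^{\alpha_j}_{T-}\mu_{\alpha_j}$ (which is indeed the content of the computation around \eqref{MRS:eq:ieq1fi}), and then the entire weight of the proof falls on the boundedness of the resulting composition with the negative power weights $\mu_{-\alpha_1}$, $\mu_{-2\alpha_2}$ and the derivative $\mathcal D^{\alpha_1}_{T-}$. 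You explicitly flag this as ``the crux'' and do not prove it; you only conjecture that it reduces to Stein--Weiss-type weighted inequalities and that $H_1\le\tfrac34$ is ``precisely the bound on the exponents'' that makes them valid. That assertion is not verified, and it is not the mechanism by which the hypothesis actually enters. Likewise, the claim that $\mathcal D^{\alpha_1}_{0+}\mu_\delta I^{\alpha_2}_{0+}$ collapses to a fractional integral of order $\delta$ is left to an unexecuted Erd\'elyi--Kober computation. As it stands, the main analytic content of the theorem is assumed rather than proved.

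For comparison, the paper avoids the weighted analysis entirely by using the \emph{additive} decomposition $\Gamma_H = H\Gamma(2H)\bigl(I^{2H-1}_{0+}+I^{2H-1}_{T-}\bigr)$ from \eqref{MRS:eq:GammaThruIpI}. The hypothesis $H_1\le\tfrac34$ enters through Proposition~\ref{MRS:prop:acutea}: for $\alpha=2H_1-1\le\tfrac12$ one has $\langle I^{\alpha}_{0+}f,\,I^{\alpha}_{T-}f\rangle\ge0$ (because $\tfrac12\bigl(I^{2\alpha}_{0+}+I^{2\alpha}_{T-}\bigr)$ is, up to a constant, the covariance operator of an fBm with Hurst index $\alpha+\tfrac12<1$), which yields $\|I^{\alpha}_{0+}f\|+\|I^{\alpha}_{T-}f\|\le\sqrt2\,\|I^{\alpha}_{0+}f+I^{\alpha}_{T-}f\|\le C\|\Gamma_{H_1}f\|$ (Proposition~\ref{MRS:prop:sunonosu}). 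Hence $I^{2H_1-1}_{0\pm}\Gamma_{H_1}^{-1}$ are bounded; a transposition lemma (Lemma~\ref{MRS:lem:optra}) transfers this to $\Gamma_{H_1}^{-1}I^{2H_1-1}_{0\pm}$ being bounded and everywhere defined; and the semigroup property $I^{2H_2-1}_{0\pm}=I^{2H_1-1}_{0\pm}I^{2(H_2-H_1)}_{0\pm}$ then exhibits $\Gamma_{H_1}^{-1}\Gamma_{H_2}$ as a sum of (bounded)$\,\circ\,$(compact) operators. If you want to complete your own route you must actually prove the weighted $L_2$ bounds you invoke and trace how $H_1\le\tfrac34$ constrains the exponents; otherwise I would recommend switching to the additive decomposition, where that hypothesis has a transparent role.
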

The proof consists of several steps.

\subsection{Convolution operator}
If $\phi \in L_1[-T,T]$, then the following convolution operator
\begin{equation}\label{MRS:eq:convopL}
Lf(x) = \int_0^T \phi(t-s) f(s) \, ds
\end{equation}
is a linear continuous operator
$L_2[0,T] \to L_2[0, T]$,
and
\begin{equation}\label{MRS:eq:boconvnorm}
\|L\| \le  \int_{-T}^T |\phi(t)| \, dt.
\end{equation}
Moreover, $L$  is a compact operator.

The adjoint operator of the operator \eqref{MRS:eq:convopL} is
\[
L^*f(x) = \int_0^T \phi(s-t) f(s) \, ds .
\]
If the function $\phi$ is even, then the linear operator $L$ is self-adjoint.

Let us consider the following convolution operators.
\begin{definition}
For $\alpha>0$, the Riemann--Liouville operators of fractional integration  are defined as
\begin{align*}
I^\alpha_{0+} f(t) = \frac{1}{\Gamma(\alpha)} \int_0^t \frac{f(s)\, ds}{(t-s)^{1-\alpha}}, \\
I^\alpha_{T-} f(t) = \frac{1}{\Gamma(\alpha)} \int^T_t \frac{f(s)\, ds}{(t-s)^{1-\alpha}}.
\end{align*}
The operators  $I^\alpha_{0+}$ and $I^\alpha_{T-}$ are mutually adjoint.
Their norm can be bounded as follows
\begin{equation}\label{MRS:neq:frInormbo}
\|I^\alpha_{T-}\| = \|I^\alpha_{0+}\| \le
\frac{1}{\Gamma(\alpha)} \int_0^T \frac{ds}{s^{1-\alpha}}
=\frac{T^\alpha}{\Gamma(\alpha+1)} .
\end{equation}

Let $\frac12 < H < 1$ and $\Gamma_H$ be the operator defined by \eqref{MRS:eq:Gamma_H}.
Then
\begin{equation}\label{MRS:eq:GammaThruIpI}
\Gamma_H %= H (2H-1) \Gamma(2H-1) (I^{2H-1}_{0+} + I^{2H-1}_{T-})
= H\Gamma(2H) \left(I^{2H-1}_{0+} + I^{2H-1}_{T-}\right).
\end{equation}

The linear operators $I^\alpha_{0+}$,  $I^\alpha_{T-}$ for $\alpha > 0$, and $\Gamma_H$ for $\frac12 < H < 1$
are injective.
\end{definition}

\subsection{Semigroup property of the operator of fractional integration}
\begin{theorem}\label{MRS:thm:IeSemi}
For $\alpha>0$ and $\beta>0$ the following equalities hold
\begin{gather*}
I^\alpha_{0+} I^\beta_{0+} = I^{\alpha+\beta}_{0+}, \\
I^\alpha_{T-} I^\beta_{T-} = I^{\alpha+\beta}_{T-}.
\end{gather*}
\end{theorem}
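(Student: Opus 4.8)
The plan is to reduce everything to the first identity $I^\alpha_{0+} I^\beta_{0+} = I^{\alpha+\beta}_{0+}$, prove it by a direct Fubini-plus-Beta-function computation, and then obtain the second identity by a reflection argument. First I would fix $f \in L_2[0,T]$ and write
\[
I^\alpha_{0+}\bigl(I^\beta_{0+} f\bigr)(t) = \frac{1}{\Gamma(\alpha)\Gamma(\beta)} \int_0^t (t-s)^{\alpha-1} \int_0^s (s-u)^{\beta-1} f(u)\, du\, ds .
\]
The crux is to interchange the two integrations. Replacing $f$ by $\absMRS{f}$ and carrying out the inner computation (see below) turns the iterated integral into $\mathrm{B}(\alpha,\beta)\int_0^t (t-u)^{\alpha+\beta-1}\absMRS{f(u)}\, du$, which is finite for almost every $t$ since it is the convolution on $[0,T]$ of the $L_1$ function $\absMRS{f}$ with the $L_1$ function $u\mapsto u^{\alpha+\beta-1}$ (using $\alpha+\beta>0$); hence Fubini's theorem applies. (Alternatively one may invoke the boundedness estimate~\eqref{MRS:neq:frInormbo} to keep all expressions inside $L_2[0,T]$.)

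After swapping the order of integration, the inner integral over $s$ is evaluated by the substitution $s = u + (t-u)v$, $v\in[0,1]$:
\[
\int_u^t (t-s)^{\alpha-1}(s-u)^{\beta-1}\, ds = (t-u)^{\alpha+\beta-1}\int_0^1 (1-v)^{\alpha-1} v^{\beta-1}\, dv = (t-u)^{\alpha+\beta-1}\,\mathrm{B}(\alpha,\beta).
\]
Plugging in $\mathrm{B}(\alpha,\beta) = \Gamma(\alpha)\Gamma(\beta)/\Gamma(\alpha+\beta)$ then gives
\[
I^\alpha_{0+}\bigl(I^\beta_{0+} f\bigr)(t) = \frac{1}{\Gamma(\alpha+\beta)}\int_0^t (t-u)^{\alpha+\beta-1} f(u)\, du = I^{\alpha+\beta}_{0+} f(t),
\]
which is the first identity.

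For the second identity, introduce the isometric involution $Rf(t):=f(T-t)$ of $L_2[0,T]$. The change of variables $s\mapsto T-s$ shows $R\, I^\gamma_{0+}\, R = I^\gamma_{T-}$ for every $\gamma>0$, so
\[
I^\alpha_{T-} I^\beta_{T-} = \bigl(R\, I^\alpha_{0+}\, R\bigr)\bigl(R\, I^\beta_{0+}\, R\bigr) = R\, I^\alpha_{0+} I^\beta_{0+}\, R = R\, I^{\alpha+\beta}_{0+}\, R = I^{\alpha+\beta}_{T-}.
\]
I expect the only genuinely delicate point to be the justification of Fubini's theorem; the remaining steps are the standard Beta-integral manipulation and a routine change of variables.
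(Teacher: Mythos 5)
Your proof is correct. The paper itself gives no argument here---it simply cites \cite[Theorem~2.5]{MRS:samko1993fractional}---and your Tonelli-then-Fubini interchange combined with the Beta-integral evaluation of $\int_u^t(t-s)^{\alpha-1}(s-u)^{\beta-1}\,ds$ is exactly the classical proof found in that reference, with the reflection $Rf(t)=f(T-t)$ providing a clean way to transfer the identity from $I^{\alpha}_{0+}$ to $I^{\alpha}_{T-}$.
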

This theorem is a particular case of \cite[Theorem~2.5]{MRS:samko1993fractional}.

\begin{proposition}\label{MRS:prop:acutea}
For $0 < \alpha \le \frac12$ and $f\in L_2[0,T]$,
\[
\langle I^\alpha_{0+} f, \: I^\alpha_{T-}f \rangle \ge 0.
\]
Equality is achieved if and only if
\begin{itemize}
\item
$f = 0$ almost everywhere on $[0, T]$ for $0 < \alpha < \frac12$;
\item
$\int_0^T f(t)\, dt = 0$  for $\alpha = \frac12$.
\end{itemize}
\end{proposition}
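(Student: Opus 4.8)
The plan is to identify $\langle I^\alpha_{0+}f,\,I^\alpha_{T-}f\rangle$ with a manifestly nonnegative quadratic form and then track when it degenerates. First I would use that $I^\alpha_{0+}$ and $I^\alpha_{T-}$ are mutually adjoint on $L_2[0,T]$ together with the semigroup property of Theorem~\ref{MRS:thm:IeSemi}: for every $f\in L_2[0,T]$,
\[
\langle I^\alpha_{0+}f,\,I^\alpha_{T-}f\rangle
=\bigl\langle I^\alpha_{0+}\bigl(I^\alpha_{0+}f\bigr),\,f\bigr\rangle
=\langle I^{2\alpha}_{0+}f,\,f\rangle .
\]
Since $I^{2\alpha}_{T-}$ is the adjoint of $I^{2\alpha}_{0+}$ and all functions are real, this equals $\langle I^{2\alpha}_{T-}f,\,f\rangle$ as well, so averaging the two expressions symmetrises the kernel and, by Fubini, gives
\[
\langle I^\alpha_{0+}f,\,I^\alpha_{T-}f\rangle
=\frac{1}{2\,\Gamma(2\alpha)}\int_0^T\!\!\int_0^T |t-s|^{2\alpha-1}\,f(t)\,f(s)\,dt\,ds .
\]
For $\alpha=\tfrac12$ the right-hand side reads $\tfrac12\bigl(\int_0^T f(t)\,dt\bigr)^2$ (equivalently, compute $\langle I^1_{0+}f,f\rangle=\tfrac12\bigl(\int_0^T f\bigr)^2$ directly), which is $\ge 0$ and vanishes precisely when $\int_0^T f=0$; this settles the endpoint.

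For $0<\alpha<\tfrac12$ put $\beta:=2\alpha-1\in(-1,0)$; it remains to prove that the kernel $|t-s|^\beta$ is \emph{strictly} positive definite on $L_2[0,T]$. I would do this by subordination. Writing $|x|^\beta=\frac{1}{\Gamma(-\beta)}\int_0^\infty \lambda^{-\beta-1}e^{-\lambda|x|}\,d\lambda$ and extending $f$ by zero to $\mathbb R$, Tonelli's theorem --- applicable because the integral operator with kernel $|t-s|^\beta$ is bounded on $L_2[0,T]$ by \eqref{MRS:eq:boconvnorm}, so that $\int_0^T\!\int_0^T|t-s|^\beta|f(t)||f(s)|\,dt\,ds<\infty$ --- yields
\[
\int_0^T\!\!\int_0^T |t-s|^\beta f(t)f(s)\,dt\,ds
=\frac{1}{\Gamma(-\beta)}\int_0^\infty \lambda^{-\beta-1}
\Bigl(\,\int_{\mathbb R}\!\int_{\mathbb R} e^{-\lambda|t-s|}f(t)f(s)\,dt\,ds\Bigr)\,d\lambda .
\]
Now $e^{-\lambda|\cdot|}\in L_1(\mathbb R)$ has Fourier transform $2\lambda/(\lambda^2+\xi^2)>0$, so by the convolution theorem and Plancherel the inner double integral equals $\frac{1}{2\pi}\int_{\mathbb R}\frac{2\lambda}{\lambda^2+\xi^2}\,|\widehat f(\xi)|^2\,d\xi\ge 0$, where $\widehat f$ denotes the Fourier transform of the zero-extended $f$. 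Substituting back and interchanging once more, one gets
\[
\langle I^\alpha_{0+}f,\,I^\alpha_{T-}f\rangle
=\frac{1}{4\pi\,\Gamma(2\alpha)\,\Gamma(1-2\alpha)}
\int_{\mathbb R}\Bigl(\int_0^\infty \frac{2\,\lambda^{-\beta}}{\lambda^2+\xi^2}\,d\lambda\Bigr)\,|\widehat f(\xi)|^2\,d\xi ,
\]
where the inner weight is a finite, strictly positive multiple of $|\xi|^{-1-\beta}=|\xi|^{-2\alpha}$ for each $\xi\neq0$ (a Beta integral). Hence the form is $\ge 0$ and equals $0$ only if $\widehat f\equiv0$, i.e.\ $f=0$ a.e.\ on $[0,T]$. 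An equivalent shortcut for this range: by \eqref{MRS:eq:GammaThruIpI}, $\langle I^{2\alpha}_{0+}f,f\rangle$ is a positive multiple of $\langle\Gamma_H f,f\rangle$ with $H=\alpha+\tfrac12\in(\tfrac12,1)$, and $\Gamma_H$ is self-adjoint, positive semi-definite and injective, hence positive definite.

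The routine parts are the two Fubini/Tonelli interchanges (all integrands are nonnegative once $f$ is replaced by $|f|$, and integrability follows from boundedness of the power-kernel operator) and the finiteness of $\int_{\mathbb R}|\xi|^{-2\alpha}|\widehat f(\xi)|^2\,d\xi$ (integrable near $0$ because $2\alpha<1$ and $\widehat f$ is continuous, integrable near $\infty$ because $\widehat f\in L_2$ and the weight is bounded). I expect the only genuinely delicate point to be the boundary case $\alpha=\tfrac12$: there the symbol $|\xi|^{-2\alpha}$ no longer forces strict positivity --- the proportionality constant degenerates while the weighted integral may diverge --- which is exactly why $\alpha=\tfrac12$ has to be treated by the separate elementary identity rather than by the Fourier computation.
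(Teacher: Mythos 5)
Your proof is correct. The opening reduction --- mutual adjointness of $I^\alpha_{0+}$ and $I^\alpha_{T-}$ plus the semigroup property, then symmetrisation to $\tfrac12\langle I^{2\alpha}_{0+}f+I^{2\alpha}_{T-}f,\,f\rangle$ --- and the elementary identity settling the endpoint $\alpha=\tfrac12$ are exactly the paper's steps. Where you genuinely diverge is the case $0<\alpha<\tfrac12$: the paper identifies $\tfrac12\bigl(I^{2\alpha}_{0+}+I^{2\alpha}_{T-}\bigr)$ with $\frac{1}{2H\Gamma(2H)}\Gamma_H$ for $H=\alpha+\tfrac12$ via \eqref{MRS:eq:GammaThruIpI}, writes $\langle\Gamma_H f,f\rangle=\MEMRS\bigl(\int_0^T f(t)\,dB^H_t\bigr)^2\ge0$, and obtains strictness from $\langle\Gamma_H f,f\rangle=\lVert\Gamma_H^{1/2}f\rVert^2$ combined with the injectivity of $\Gamma_H$ --- i.e.\ it imports the positive definiteness of the fBm covariance operator as a known fact (asserted, not reproved, in the appendix). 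You instead prove strict positive definiteness of the kernel $|t-s|^{2\alpha-1}$ from scratch by subordination to $e^{-\lambda|x|}$ and Plancherel, landing on $c\int_{\mathbb R}|\xi|^{-2\alpha}|\widehat f(\xi)|^2\,d\xi$ with $c>0$; the Fubini/Tonelli justifications and the breakdown of $\Gamma(-\beta)$ at $\beta=0$ (forcing the separate treatment of $\alpha=\tfrac12$) are all handled correctly. Your route is longer but self-contained and in effect supplies a proof of the injectivity/positivity of $\Gamma_H$ that the paper only asserts; the paper's route is shorter because it leans on that probabilistic fact. Since you also record the paper's argument as an equivalent shortcut, nothing is missing.
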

 \begin{proof}
Since the operators $I^\alpha_{0+}$ and $I^\alpha_{T-}$ are mutually adjoint, by semigroup property, we have that
\begin{gather*}
\langle I^\alpha_{0+} f, \: I^\alpha_{T-}f \rangle =
\langle I^\alpha_{0+} I^\alpha_{0+} f, f \rangle =
\langle I^{2\alpha}_{0+} f, f \rangle, \\
\langle I^\alpha_{0+} f, \: I^\alpha_{T-}f \rangle =
\langle f, I^\alpha_{T-} I^\alpha_{T-}f \rangle =
\langle f, I^{2\alpha}_{T-}f \rangle.
\end{gather*}
Adding these equalities, we obtain
\begin{equation}\label{MRS:eq:halfint}
\langle I^\alpha_{0+} f, \: I^\alpha_{T-}f \rangle =
\frac12
\langle I^{2\alpha}_{0+} f + I^{2\alpha}_{T-}f, \: f \rangle.
\end{equation}

If $0 < \alpha < \frac12$, then
\begin{align*}
\langle I^\alpha_{0+} f, \: I^\alpha_{T-}f \rangle &=
\frac{1}{2 H \Gamma(2 H)}\,
\langle \Gamma_H f, \: f \rangle
= \\ &=
\frac{1}{2 H \Gamma(2 H)}
\MEMRS \biggl(\int_0^T f(t) \, dB_t^H \biggr)^2 \ge 0.
\end{align*}
where $H = \alpha + \frac12$, $\frac12 < H < 1$, and
$B_t^H$ is a fractional Brownian motion.

Let us consider the case $\alpha = \frac12$.
Since
\begin{align*}
I^1_{0+} f (t) + I^1_{T-}f (t)
&= \int_0^t f(s)\, ds + \int_t^T f(s)\, ds =
\int_0^T f(s)\, ds,
\\
I^1_{0+} f + I^1_{T-}f
&= \int_0^T f(s)\, ds \, \indicatorfMRS_{[0,T]},
\\
\left \langle I^1_{0+} f + I^1_{T-}f, \: f \right \rangle &=
\int_0^T f(s)\, ds \: \langle   \indicatorfMRS_{[0,T]}, \: f\rangle =
\biggl( \int_0^T f(s)\, ds \biggr) ^2,
\end{align*}
we see from \eqref{MRS:eq:halfint} that
\begin{equation}\label{MRS:neq:line1163}
\left \langle I^{1/2}_{0+} f, \: I^{1/2}_{T-}f \right \rangle =
\frac12 \biggl( \int_0^T f(s)\, ds \biggr)^2  \ge 0.
\end{equation}

Conditions for the equality
$\langle I^\alpha_{0+} f, \: I^\alpha_{T-}f \rangle = 0$
can be easily found by analyzing the proof.
  Indeed, if $0 < \alpha < \frac12$ and $H = \alpha + \frac12$,
  then $\Gamma_H$ is a self-adjoint positive compact operator
  whose eigenvalues are all positive.
  Then $2 H \Gamma(2H) \langle I^\alpha_{0+} f, \: I^\alpha_{T-}f \rangle
  = \langle \Gamma_H f, \: f \rangle
  = \| \Gamma_H^{1/2} f \|^2$.
  In this case, the equality $\langle I^\alpha_{0+} f, \: I^\alpha_{T-}f \rangle = 0$
  holds true if and only if $f = 0$ almost everywhere on $[0,T]$.
  If $\alpha = \frac12$, then the condition for the equality
  follows from \eqref{MRS:neq:line1163}. \qed
\end{proof}

\begin{proposition}\label{MRS:prop:sunonosu}
For $0 < \alpha \le \frac12$ and $f \in L_2[0,T]$,
\begin{equation}\label{MRS:neq:sunonosu1}
\| I^\alpha_{0+} f \| + \| I^\alpha_{T-}f \| \le \sqrt{2}\,
\| I^\alpha_{0+} f + I^\alpha_{T-}f \|.
\end{equation}
Consequently, for $\frac 12 < H \le \frac34$
\[
\left \| I^{2H-1}_{0+} f \right \| + \left \| I^{2H-1}_{T-}f \right \| \le
\frac{\sqrt{2}}{H \Gamma(2H)}
\| \Gamma_H f \| .
\]
\end{proposition}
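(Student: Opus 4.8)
The plan is to reduce the first inequality to the positivity of the cross term $\langle I^\alpha_{0+} f,\, I^\alpha_{T-} f\rangle$ already established in Proposition~\ref{MRS:prop:acutea}, and then to obtain the second (``consequently'') inequality by the substitution $\alpha = 2H-1$ together with the representation~\eqref{MRS:eq:GammaThruIpI} of $\Gamma_H$.

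First I would abbreviate $a = I^\alpha_{0+} f$ and $b = I^\alpha_{T-} f$, so that the claim becomes $\|a\| + \|b\| \le \sqrt{2}\,\|a + b\|$. Proposition~\ref{MRS:prop:acutea} gives $\langle a,\, b\rangle \ge 0$ for $0 < \alpha \le \tfrac12$, hence
\[
\|a + b\|^2 = \|a\|^2 + 2\langle a,\, b\rangle + \|b\|^2 \ge \|a\|^2 + \|b\|^2 .
\]
Combining this with the elementary bound $2\|a\|\,\|b\| \le \|a\|^2 + \|b\|^2$ yields
\[
\bigl(\|a\| + \|b\|\bigr)^2 \le 2\bigl(\|a\|^2 + \|b\|^2\bigr) \le 2\,\|a + b\|^2 ,
\]
and taking square roots proves the first inequality.

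For the second assertion I would fix $\tfrac12 < H \le \tfrac34$, set $\alpha = 2H - 1 \in (0, \tfrac12]$ (so the first part applies), and invoke~\eqref{MRS:eq:GammaThruIpI}, which gives $I^{2H-1}_{0+} f + I^{2H-1}_{T-} f = \bigl(H\,\Gamma(2H)\bigr)^{-1}\,\Gamma_H f$. Substituting this identity into the inequality just obtained produces the stated bound with constant $\sqrt{2}/(H\,\Gamma(2H))$.

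I do not expect any genuine obstacle at this stage: the whole substance of the proposition is carried by Proposition~\ref{MRS:prop:acutea}, which fixes the sign of $\langle I^\alpha_{0+} f,\, I^\alpha_{T-} f\rangle$ and has already been proved; what remains here is only the triangle-type estimate above and a routine change of variables. The one point to state carefully is that the argument covers the full range $0 < \alpha \le \tfrac12$ uniformly, since Proposition~\ref{MRS:prop:acutea} only asserts nonnegativity (not strict positivity) at $\alpha = \tfrac12$, and nonnegativity is all that is used.
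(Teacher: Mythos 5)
Your proof is correct and is essentially the same as the paper's: both rest on the nonnegativity of $\langle I^\alpha_{0+} f,\, I^\alpha_{T-} f\rangle$ from Proposition~\ref{MRS:prop:acutea} combined with the elementary bound $(\|a\|+\|b\|)^2 \le 2(\|a\|^2+\|b\|^2)$, and both deduce the second inequality from the representation~\eqref{MRS:eq:GammaThruIpI}. The paper merely writes the two estimates as a single chain of inequalities rather than as two separate steps.
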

\begin{proof}
Taking into account Proposition~\ref{MRS:prop:acutea}, we get
\begin{align*}
(\| I^\alpha_{0+} f \| + \| I^\alpha_{T-}f \|)^2
&\le
2\,\| I^\alpha_{0+} f \|^2 + 2\,\| I^\alpha_{T-}f \|^2
\\ &\le
2\,\| I^\alpha_{0+} f \|^2 + 4 \, \langle I^\alpha_{0+} f, \: I^\alpha_{T-}f \rangle  + 2\,\| I^\alpha_{T-}f \|^2
\\ &=
2 \, \| I^\alpha_{0+} f + I^\alpha_{T-}f \|^2 ,
\end{align*}
whence the inequality \eqref{MRS:neq:sunonosu1} follows.

The second statement is obtained by the representation~\eqref{MRS:eq:GammaThruIpI}.\qed
\end{proof}

\subsection{Transposition of operators}

\begin{lemma}\label{MRS:lem:optra}
Let $A$ be a linear continuous operator on $L_2[0,T]$, and
$B$ be an injective self-adjoint compact linear operator on $L_2[0,T]$.
If the linear operator $A^* B^{-1}$ is bounded, that is
$\|A^* B^{-1}\| = K < \infty$, then
the linear operator $B^{-1} A$ is defined on the entire space
$L_2[0,T]$, bounded, and $\|B^{-1} A\| = K$.
\end{lemma}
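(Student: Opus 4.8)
Here is the plan. The idea is to convert the boundedness hypothesis into a pointwise estimate and then reconstruct $B^{-1}A$ via a Riesz‑representation argument. Since $B$ is self‑adjoint and injective, its range $\operatorname{ran} B$ is dense in $L_2[0,T]$ (indeed $\overline{\operatorname{ran} B} = (\ker B^*)^\perp = (\ker B)^\perp = L_2[0,T]$), and $B^{-1}$ is the densely defined operator with domain $\operatorname{ran} B$. The hypothesis $\|A^*B^{-1}\| = K$ means exactly that $\|A^*B^{-1}y\| \le K\|y\|$ for every $y \in \operatorname{ran} B$; writing $y = Bx$ with $x \in L_2[0,T]$ uniquely determined by injectivity of $B$, this becomes
\[
\|A^*x\| \le K\,\|Bx\| \qquad \text{for all } x \in L_2[0,T].
\]

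Next, fix $f \in L_2[0,T]$ and define a linear functional $\phi$ on the dense subspace $\operatorname{ran} B$ by $\phi(Bx) := \langle Af,\, x\rangle$, which is well defined because $B$ is injective. Using the displayed inequality,
\[
|\phi(Bx)| = |\langle f,\, A^*x\rangle| \le \|f\|\,\|A^*x\| \le K\|f\|\,\|Bx\|,
\]
so $\phi$ is bounded with $\|\phi\| \le K\|f\|$ and extends (uniquely, by density) to a bounded functional on all of $L_2[0,T]$ of the same norm. By the Riesz representation theorem there is $h \in L_2[0,T]$ with $\|h\| \le K\|f\|$ and $\phi(\,\cdot\,) = \langle\,\cdot\,,\, h\rangle$; in particular, for every $x$,
\[
\langle Af,\, x\rangle = \phi(Bx) = \langle Bx,\, h\rangle = \langle x,\, Bh\rangle,
\]
where the last step uses $B = B^*$. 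Hence $Af = Bh \in \operatorname{ran} B$, so $B^{-1}A$ is defined on all of $L_2[0,T]$, and $\|B^{-1}Af\| = \|h\| \le K\|f\|$; this gives $\|B^{-1}A\| \le K$.

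For the reverse inequality, I would use adjoints. The operator $B^{-1}A$ is now everywhere defined and bounded, so it has an everywhere‑defined bounded adjoint with $\|(B^{-1}A)^*\| = \|B^{-1}A\|$. One checks that $(B^{-1}A)^*$ restricts to $A^*B^{-1}$ on $\operatorname{ran} B$: for $y = Bx$ and arbitrary $g$, using $B = B^*$ and $Ag \in \operatorname{ran} B$,
\[
\langle (B^{-1}A)^*y,\, g\rangle = \langle Bx,\, B^{-1}Ag\rangle = \langle x,\, Ag\rangle = \langle A^*x,\, g\rangle = \langle A^*B^{-1}y,\, g\rangle.
\]
Since $\operatorname{ran} B$ is dense, $\|(B^{-1}A)^*\| \ge \sup\{\|A^*B^{-1}y\| : y \in \operatorname{ran} B,\ \|y\| \le 1\} = \|A^*B^{-1}\| = K$. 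Combining the two bounds yields $\|B^{-1}A\| = K$.

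The one delicate point is the bookkeeping around the unbounded, densely defined operator $B^{-1}$: one must read ``$\|A^*B^{-1}\| = K$'' as a bound on its natural domain $\operatorname{ran} B$, and density of $\operatorname{ran} B$ — guaranteed by injectivity and self‑adjointness of $B$ — is precisely what legitimizes both the extension step and the comparison of adjoint norms. Everything else is a routine application of the Riesz representation theorem together with the identity $B^* = B$.
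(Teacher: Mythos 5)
Your proof is correct, and it takes a genuinely different route from the paper's. The paper diagonalizes $B$ using the spectral theorem for compact self-adjoint operators (an orthonormal eigenbasis with nonzero eigenvalues $\lambda_k \to 0$) and then argues by contradiction: assuming $Af \notin B(L_2[0,T])$, or assuming $\|B^{-1}Af\| > K\|f\|$, it truncates the eigenexpansion of $Af$ at a level $N$ where the partial sum $s_N = \sum_{k\le N} x_k^2/\lambda_k^2$ exceeds $K^2\|f\|^2$, builds a test vector $g = \sum_{k\le N}(x_k/\lambda_k)e_k$, and contradicts $\|A^*B^{-1}\| = K$ via Cauchy--Schwarz; the reverse inequality $\|B^{-1}A\| \ge K$ is obtained from the identity $\|A^*B^{-1}f\|^2 = \langle B^{-1}AA^*B^{-1}f, f\rangle$. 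You instead avoid the spectral decomposition entirely: you recast the hypothesis as the pointwise bound $\|A^*x\| \le K\|Bx\|$, build the bounded functional $\phi(Bx) = \langle Af, x\rangle$ on the dense subspace $\operatorname{ran}B$, and use Riesz representation plus $B = B^*$ to produce $h$ with $Af = Bh$ and $\|h\| \le K\|f\|$ in one stroke; the reverse inequality follows from identifying $A^*B^{-1}$ as the restriction of $(B^{-1}A)^*$ to $\operatorname{ran}B$. A notable by-product of your argument is that it never uses compactness of $B$ — only boundedness, self-adjointness and injectivity (which already give density of $\operatorname{ran}B$) — so your version of the lemma is strictly more general than the paper's, whose eigenbasis construction is where compactness enters. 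Both proofs are sound; yours is shorter and more structural, the paper's is more elementary in that it only manipulates series coordinates. The only bookkeeping point worth making explicit in your write-up is the one you already flag: $\|A^*B^{-1}\| = K$ must be read as the supremum over the natural domain $\operatorname{ran}B$, which is exactly what makes both the extension of $\phi$ and the final supremum comparison legitimate.
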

\begin{proof}
For the self-adjoint compact linear operator $B$
one can find an orthonormal  eigenbasis
$\{e_1, e_2, \ldots\}$ such that
\[
B\biggl( \sum_{k=1}^\infty x_k e_k \biggr) =
\sum_{k=1}^\infty \lambda_k x_k e_k
\quad
\text{for}
\quad
\sum_{k=1}^\infty x_k^2 < +\infty.
\]
Then $\lim_{k \to \infty} \lambda_k = 0$
(by compactness), but
for all $k$ the inequality $\lambda_k \neq 0$ holds
(by injectivity).

The inverse operator is a self-adjoint linear operator defined
by the equation
\[
B^{-1} \biggl( \sum_{k=1}^\infty x_k e_k \biggr) =
\sum_{k=1}^\infty \frac{x_k}{\lambda_k} e_k
\quad
\text{for}
\quad
\sum_{k=1}^\infty \frac{x_k^2}{\lambda_k^2} < +\infty.
\]
The domain of the operator $B^{-1}$ is the subset
\[
B(L_2[0,T]) = \left\{
\sum_{k=1}^\infty x_k e_k : \sum_{k=1}^\infty \frac{x_k^2}{\lambda_k^2} < \infty\right\}
\]
of the Hilbert space $L_2[0,T]$.

Let us prove that the operator $B^{-1} A$ is defined on $L_2[0,T]$.  Assume the opposite, i.\,e.,
$B^{-1} A$ is undefined at some point $f \in L_2[0, T]$.
This means that $A f \not\in B(L_2[0,T])$.

Decompose $A f$ into a series by the eigenfunctions of the operator $B$:
\begin{equation}\label{MRS:eq:decomp}
A f = \sum_{k=1}^\infty x_k e_k .
\end{equation}
Since $A f \not\in B(L_2[0,T])$, we see that
\[
\sum_{k=1}^\infty \frac{x_k^2}{\lambda_k^2} = +\infty,
\]
and for
\[
s_n = \sum_{k=1}^n \frac{x_k^2}{\lambda_k^2}
\]
it holds that $\lim_{n\to\infty} s_n = +\infty$
and $s_n \ge 0$ for all $n\in\mathbb{N}$.
Therefore, there exists $N\in\mathbb{N}$ such that
\begin{equation}\label{MRS:neq:su249a1}
s_N > K^2 \, \|f\|^2  .
\end{equation}

Put
\[
g = \sum_{k=1}^N \frac{x_k}{\lambda_k} e_k .
\]
Then
\begin{gather*}
\|g\|^2 = \sum_{k=1}^N \frac{x_k^2}{\lambda_k^2} = s_N, \qquad
\|g\| = \sqrt{s_k}, \qquad
g \in B(L_2[0, T]), \\
B^{-1} g = \sum_{k=1}^N \frac{x_k}{\lambda_k^2} e_k, \qquad
\left \langle A^* B^{-1} g, \: f \right \rangle =
\left \langle B^{-1} g, A f \right \rangle
= \sum_{k=1}^N  \frac{x_k}{\lambda_k^2} \, x_k = s_N .
\end{gather*}
By the Cauchy--Schwarz inequality,
\[
\left| \left\langle A^* B^{-1} g, \: f \right\rangle \right| \le
\left \| A^* B^{-1} g \right \| \, \| f \| \le \| A^* B^{-1} \| \,
\|g\| \, \| f \| = K \, \sqrt{s_N} \, \|f\| .
\]
Hence,
\begin{equation}\label{MRS:neq:su249a2}
s_N \le K \, \sqrt{s_N} \, \|f\| .
\end{equation}
The inequalities \eqref{MRS:neq:su249a1} and \eqref{MRS:neq:su249a2}
contradict each other.
Thus, the operator $B^{-1} A$ is defined on the entire space $L_2[0,T]$.

Now let us prove boundedness of the operator $B^{-1} A$ and the
inequality \linebreak $\left\| B^{-1} A \right\| \le K$. Suppose
that this is not so. Then there exists an element $f$ of the space
$L_2[0,T]$ such that
\begin{equation}\label{MRS:neq:su249a30}
\left \|B^{-1} A f\right \| > K \|f\|.
\end{equation}

We use the same decomposition of the vector $Af$ into the eigenvectors of $B$ as above, see \eqref{MRS:eq:decomp}. Then
\begin{gather*}
 B^{-1} A f = \sum_{k=1}^\infty \frac{x_k}{\lambda_k} e_k, \qquad
 \left \| B^{-1} A f \right \|^2 = \sum_{k=1}^\infty \frac{x_k^2}{\lambda_k^2}, \\
 \lim_{n\to\infty} s_n = \left \| B^{-1} A f \right \|^2 > K^2 \|f\|^2,
\end{gather*}
by~\eqref{MRS:neq:su249a30}.
Therefore, there exists $N\in\mathbb{N}$ such that the inequality~\eqref{MRS:neq:su249a1} holds.
Arguing as above, we get a contradiction.
Hence, $\|B^{-1} A\| \le K$.

It remains to prove the opposite inequality $\|B^{-1} A\| \ge K$.
The operator $A^* B^{-1}$ is defined on the set $B(L_2[0,T])$.
For all $f \in B(L_2[0,T])$ from the domain of the operator
$A^* B^{-1}$, we have
\begin{align*}
\| A^* B^{-1} f \|^2
&= \langle B^{-1} A A^* B^{-1} f, \: f \rangle
\le \\  &\le
\| B^{-1} A A^* B^{-1} f \| \, \|f\|
\le
\| B^{-1} A \| \, \|A^* B^{-1} f \| \, \|f\|,
\end{align*}
whence
\[
\| A^* B^{-1} f \| \le \| B^{-1} A \|  \, \|f\| .
\]
Therefore
$K = \| A^* B^{-1} \| \le \| B^{-1} A \|$.\qed
\end{proof}

\subsection{The proof of boundedness and compactness}
\begin{proposition}
Let $\frac12 < H_1 < H_2 < 1$ and $H_1 \le \frac34$. Then
$\Gamma_{H_1}^{-1} \Gamma_{H_2}^{}$ is a compact linear operator
defined on the entire space $L_2[0,T]$.
\end{proposition}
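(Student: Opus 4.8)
The plan is to realise $\Gamma_{H_1}^{-1}\Gamma_{H_2}$ as a sum of two everywhere-defined compact operators, using the semigroup property of fractional integration to peel off the extra smoothing of order $2(H_2-H_1)$ that $\Gamma_{H_2}$ carries relative to $\Gamma_{H_1}$. Since $\Gamma_{H_1}^{-1}$ is unbounded, one cannot simply argue ``compact composed with unbounded''; the decomposition below is designed precisely to confine all the unboundedness into a single factor, which then turns out to be bounded.

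First I would set $\beta=2(H_2-H_1)$ and note that $\beta\in(0,1)$ because $\tfrac12<H_1<H_2<1$. By Theorem~\ref{MRS:thm:IeSemi}, $I^{2H_2-1}_{0+}=I^{2H_1-1}_{0+}I^{\beta}_{0+}$ and $I^{2H_2-1}_{T-}=I^{2H_1-1}_{T-}I^{\beta}_{T-}$. Inserting this into the representation~\eqref{MRS:eq:GammaThruIpI} of $\Gamma_{H_2}$, using the identity $AC+BD=\tfrac12(A+B)(C+D)+\tfrac12(A-B)(C-D)$ with $A=I^{2H_1-1}_{0+}$, $B=I^{2H_1-1}_{T-}$, $C=I^{\beta}_{0+}$, $D=I^{\beta}_{T-}$, and recalling $I^{2H_1-1}_{0+}+I^{2H_1-1}_{T-}=\bigl(H_1\Gamma(2H_1)\bigr)^{-1}\Gamma_{H_1}$, I obtain
\[
\Gamma_{H_2}=\frac{H_2\Gamma(2H_2)}{2H_1\Gamma(2H_1)}\,\Gamma_{H_1}\bigl(I^{\beta}_{0+}+I^{\beta}_{T-}\bigr)+\frac{H_2\Gamma(2H_2)}{2}\bigl(I^{2H_1-1}_{0+}-I^{2H_1-1}_{T-}\bigr)\bigl(I^{\beta}_{0+}-I^{\beta}_{T-}\bigr),
\]
and hence, applying $\Gamma_{H_1}^{-1}$,
\[
\Gamma_{H_1}^{-1}\Gamma_{H_2}=\frac{H_2\Gamma(2H_2)}{2H_1\Gamma(2H_1)}\bigl(I^{\beta}_{0+}+I^{\beta}_{T-}\bigr)+\frac{H_2\Gamma(2H_2)}{2}\,\Gamma_{H_1}^{-1}\bigl(I^{2H_1-1}_{0+}-I^{2H_1-1}_{T-}\bigr)\bigl(I^{\beta}_{0+}-I^{\beta}_{T-}\bigr).
\]
It then remains to check that the right-hand side is a well-defined compact operator on all of $L_2[0,T]$.

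For the first summand I would observe that $I^{\beta}_{0+}+I^{\beta}_{T-}$ and $I^{\beta}_{0+}-I^{\beta}_{T-}$ are convolution operators~\eqref{MRS:eq:convopL} with kernels $|t-s|^{\beta-1}/\Gamma(\beta)$ and $\operatorname{sgn}(t-s)\,|t-s|^{\beta-1}/\Gamma(\beta)$, both in $L_1[-T,T]$ since $\beta-1\in(-1,0)$; hence both are compact, and in particular the first summand is compact and everywhere defined. For the second summand it suffices to show that $\Gamma_{H_1}^{-1}\bigl(I^{2H_1-1}_{0+}-I^{2H_1-1}_{T-}\bigr)$ is bounded and defined on the whole of $L_2[0,T]$: it is then post-composed with the compact operator $I^{\beta}_{0+}-I^{\beta}_{T-}$, which makes the product compact and everywhere defined (this also justifies the formal step above). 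I would obtain this from Lemma~\ref{MRS:lem:optra} with $A=I^{2H_1-1}_{0+}-I^{2H_1-1}_{T-}$ and $B=\Gamma_{H_1}$ (injective, self-adjoint, compact): since $I^{2H_1-1}_{0+}$ and $I^{2H_1-1}_{T-}$ are mutually adjoint, $A^{*}=I^{2H_1-1}_{T-}-I^{2H_1-1}_{0+}$, and for $g=\Gamma_{H_1}v$ Proposition~\ref{MRS:prop:sunonosu} gives
\[
\bigl\|A^{*}\Gamma_{H_1}^{-1}g\bigr\|\le\bigl\|I^{2H_1-1}_{0+}v\bigr\|+\bigl\|I^{2H_1-1}_{T-}v\bigr\|\le\frac{\sqrt2}{H_1\Gamma(2H_1)}\,\|\Gamma_{H_1}v\|=\frac{\sqrt2}{H_1\Gamma(2H_1)}\,\|g\|,
\]
so $\|A^{*}\Gamma_{H_1}^{-1}\|<\infty$ and Lemma~\ref{MRS:lem:optra} applies. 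Summing the two compact summands finishes the proof of the proposition; boundedness of $\Gamma_{H_1}^{-1}\Gamma_{H_2}$ is then immediate, and the invertibility of $I+\Gamma_{H_1}^{-1}\Gamma_{H_2}$ in Theorem~\ref{MRS:th:boundedness} follows by the Fredholm alternative, since $(\Gamma_{H_1}+\Gamma_{H_2})f=0$ forces $f=0$.

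The step I expect to be the main obstacle, and the one that uses the hypothesis $H_1\le\tfrac34$ in an essential way, is the boundedness of the factor $\Gamma_{H_1}^{-1}\bigl(I^{2H_1-1}_{0+}-I^{2H_1-1}_{T-}\bigr)$. Without the decomposition one is stuck with the bare composition of the compact operator $\Gamma_{H_2}$ with the unbounded $\Gamma_{H_1}^{-1}$; it is only after isolating this factor that Lemma~\ref{MRS:lem:optra} together with the norm inequality of Proposition~\ref{MRS:prop:sunonosu} — valid exactly for $\tfrac12<H_1\le\tfrac34$ — can be brought to bear.
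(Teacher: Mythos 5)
Your proof is correct and follows essentially the same route as the paper: the semigroup property peels off a compact factor $I^{2(H_2-H_1)}_{0+}$ or $I^{2(H_2-H_1)}_{T-}$, and the control of $\Gamma_{H_1}^{-1}$ against the operators $I^{2H_1-1}_{0+}$, $I^{2H_1-1}_{T-}$ comes from Proposition~\ref{MRS:prop:sunonosu} transposed through Lemma~\ref{MRS:lem:optra}, which is exactly where $H_1\le\frac34$ enters. The only difference is cosmetic: the paper writes $\Gamma_{H_1}^{-1}\Gamma_{H_2}$ directly as $H_2\Gamma(2H_2)\bigl(\Gamma_{H_1}^{-1}I^{2H_1-1}_{0+}I^{2(H_2-H_1)}_{0+}+\Gamma_{H_1}^{-1}I^{2H_1-1}_{T-}I^{2(H_2-H_1)}_{T-}\bigr)$ with each summand a bounded operator composed with a compact one, whereas your polarization identity regroups this into an explicitly compact convolution term plus a single difference term, both reductions resting on the same two auxiliary results.
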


\begin{proof}
The operator $\Gamma_{H_1}$ is an injective self-adjoint compact operator $L_2[0,T] \to L_2[0,T]$.
The inverse operator $\Gamma_{H_1}^{-1}$ is densely defined on $L_2[0,T]$.
By Proposition~\ref{MRS:prop:sunonosu}, the operators $I^{2H_1-1}_{0+} \Gamma_{H_1}^{-1}$
and $I^{2H_1-1}_{T-} \Gamma_{H_1}^{-1}$ are bounded.
Therefore, by Lemma~\ref{MRS:lem:optra}, the operators $\Gamma_{H_1}^{-1} I^{2H_1-1}_{T-}$
and $\Gamma_{H_1}^{-1} I^{2H_1-1}_{0+}$ are also bounded and defined on the entire space $L_2[0,T]$.
By \eqref{MRS:eq:GammaThruIpI} and the semigroup property (Theorem~\ref{MRS:thm:IeSemi}),
\begin{align*}
\Gamma_{H_1}^{-1} \Gamma_{H_2}^{}
&=
H\Gamma(2H) \left( \Gamma_{H_1}^{-1} I^{2H_2-1}_{0+} + \Gamma_{H_1}^{-1} I^{2H_2-1}_{T-} \right)
= \\ &=
H\Gamma(2H) \left( \Gamma_{H_1}^{-1} I^{2H_1-1}_{0+} I^{2(H_2-H_1)}_{0+} + \Gamma_{H_1}^{-1} I^{2H_1-1}_{T-} I^{2(H_2-H_1)}_{T-}\right) .
\end{align*}
Since $I^{2(H_2-H_1)}_{0+}$ and $I^{2(H_2-H_1)}_{T-}$ are compact operators,
the operator $\Gamma_{H_1}^{-1} \Gamma_{H_2}^{}$ is also compact.\qed
\end{proof}

\subsection{The proof of invertibility}
Now prove that $-1$ is not an eigenvalue of the linear operator $\Gamma_{H_1}^{-1}\Gamma_{H_2}^{}$.
Indeed, if $\Gamma_{H_1}^{-1}\Gamma_{H_2}^{} f = -f$ for some function $f \in L_2[0,T]$,
then $\Gamma_{H_2} f + \Gamma_{H_1} f = 0$.
Since $\Gamma_{H_2}$ and $\Gamma_{H_1}$ are positive definite self-adjoint (and injective) operators,
$\Gamma_{H_2} + \Gamma_{H_1}$ is also a positive definite self-adjoint and injective operator.
Hence $f = 0$ almost everywhere on $[0, T]$.

Because $-1$ is not an eigenvalue of the compact linear operator $\Gamma_{H_1}^{-1}\Gamma_{H_2}^{}$,
$-1$ is a regular point, i.e., $-1 \not\in \sigma(\Gamma_{H_1}^{-1}\Gamma_{H_2}^{})$,
and the linear operator $\Gamma_{H_1}^{-1}\Gamma_{H_2}^{} + I$ is invertible.

\bigskip
\begin{petit}
\noindent
\textbf{Acknowledgements }
The research of Yu.~Mishura was funded (partially) by the Australian Government through the Australian Research Council (project number DP150102758).
Yu.~Mishura and K. Ralchenko acknowledge that the present research is carried through within the frame and support of the ToppForsk project nr. 274410 of the Research Council of Norway with title STORM: Stochastics for Time-Space Risk Models.
\end{petit}

\bibliographystyle{spmpsci}
\bibliography{biblio}

\begin{thebibliography}{10}
\providecommand{\url}[1]{{#1}}
\providecommand{\urlprefix}{URL }
\expandafter\ifx\csname urlstyle\endcsname\relax
  \providecommand{\doi}[1]{DOI~\discretionary{}{}{}#1}\else
  \providecommand{\doi}{DOI~\discretionary{}{}{}\begingroup
  \urlstyle{rm}\Url}\fi

\bibitem{MRS:BESO}
Belfadli, R., Es-Sebaiy, K., Ouknine, Y.: {Parameter estimation for fractional
  Ornstein--Uhlenbeck processes: non-ergodic case}.
\newblock {Frontiers in Science and Engineering} \textbf{1}(1), 1--16 (2011)

\bibitem{MRS:benassi}
{Benassi}, A., {Cohen}, S., {Istas}, J.: {Identifying the multifractional
  function of a Gaussian process.}
\newblock {Stat. Probab. Lett.} \textbf{39}(4), 337--345 (1998)

\bibitem{MRS:Bercu2010}
Bercu, B., Coutin, L., Savy, N.: Sharp large deviations for the fractional
  {O}rnstein--{U}hlenbeck process.
\newblock Teor. Veroyatn. Primen. \textbf{55}(4), 732--771 (2010)

\bibitem{MRS:BerWol}
Berger, J., Wolpert, R.: Estimating the mean function of a {G}aussian process
  and the {S}tein effect.
\newblock J. Multivariate Anal. \textbf{13}(3), 401--424 (1983)

\bibitem{MRS:BTT}
Bertin, K., Torres, S., Tudor, C.A.: Maximum-likelihood estimators and random
  walks in long memory models.
\newblock Statistics \textbf{45}(4), 361--374 (2011)

\bibitem{MRS:BGT}
{Bojdecki}, T., {Gorostiza}, L.G., {Talarczyk}, A.: {Sub-fractional Brownian
  motion and its relation to occupation times.}
\newblock {Stat. Probab. Lett.} \textbf{69}(4), 405--419 (2004)

\bibitem{MRS:CaiChigKlept}
Cai, C., Chigansky, P., Kleptsyna, M.: Mixed {G}aussian processes: {A}
  filtering approach.
\newblock Ann. Probab. \textbf{44}(4), 3032--3075 (2016)

\bibitem{MRS:CES}
C{\'e}nac, P., Es-Sebaiy, K.: Almost sure central limit theorems for random
  ratios and applications to {LSE} for fractional {O}rnstein--{U}hlenbeck
  processes.
\newblock Probab. Math. Statist. \textbf{35}(2), 285--300 (2015)

\bibitem{MRS:cheredito}
{Cheridito}, P.: {Mixed fractional Brownian motion.}
\newblock {Bernoulli} \textbf{7}(6), 913--934 (2001)

\bibitem{MRS:EMESO}
El~Machkouri, M., Es-Sebaiy, K., Ouknine, Y.: Least squares estimator for
  non-ergodic {O}rnstein--{U}hlenbeck processes driven by {G}aussian processes.
\newblock J. Korean Statist. Soc. \textbf{45}(3), 329--341 (2016)

\bibitem{MRS:Es-Sebaiy}
{Es-Sebaiy}, K.: {Berry-Ess\'een bounds for the least squares estimator for
  discretely observed fractional Ornstein--Uhlenbeck processes}.
\newblock {Stat. Probab. Lett.} \textbf{83}(10), 2372--2385 (2013)

\bibitem{MRS:ESN}
Es-sebaiy, K., Ndiaye, D.: On drift estimation for non-ergodic fractional
  {O}rnstein--{U}hlenbeck process with discrete observations.
\newblock Afr. Stat. \textbf{9}(1), 615--625 (2014)

\bibitem{MRS:EOO}
Es-Sebaiy, K., Ouassou, I., Ouknine, Y.: Estimation of the drift of fractional
  {B}rownian motion.
\newblock Statist. Probab. Lett. \textbf{79}(14), 1647--1653 (2009)

\bibitem{MRS:HV}
{Houdr\'e}, C., {Villa}, J.: {An example of infinite dimensional quasi-helix.}
\newblock In: {Stochastic models. Seventh symposium on probability and
  stochastic processes, June 23--28, 2002, Mexico City, Mexico. Selected
  papers}, pp. 195--201. Providence, RI: American Mathematical Society (AMS)
  (2003)

\bibitem{MRS:HuNu}
Hu, Y., Nualart, D.: Parameter estimation for fractional
  {O}rnstein--{U}hlenbeck processes.
\newblock Statistics and Probability Letters \textbf{80}(11-12), 1030--1038
  (2010)

\bibitem{MRS:HNXZ11}
Hu, Y., Nualart, D., Xiao, W., Zhang, W.: Exact maximum likelihood estimator
  for drift fractional {B}rownian motion at discrete observation.
\newblock Acta Math. Sci. Ser. B Engl. Ed. \textbf{31}(5), 1851--1859 (2011)

\bibitem{MRS:HuNuZhou}
Hu, Y., Nualart, D., Zhou, H.: Parameter estimation for fractional
  {O}rnstein--{U}hlenbeck processes of general {H}urst parameter.
\newblock arXiv preprint arXiv:1703.09372  (2017)

\bibitem{MRS:HuSong13}
Hu, Y., Song, J.: Parameter estimation for fractional {O}rnstein-{U}hlenbeck
  processes with discrete observations.
\newblock In: {Malliavin calculus and stochastic analysis. A Festschrift in
  honor of David Nualart}, pp. 427--442. New York, NY: Springer (2013)

\bibitem{MRS:IbrRoz}
Ibragimov, I.A., Rozanov, Y.A.: Gaussian random processes, \emph{Applications
  of Mathematics}, vol.~9.
\newblock Springer-Verlag, New York-Berlin (1978)

\bibitem{MRS:KleptsynaLeBreton}
Kleptsyna, M.L., Le~Breton, A.: Statistical analysis of the fractional
  {Ornstein–-Uhlenbeck} type process.
\newblock Statist. Inference Stoch. Process. \textbf{5}, 229–--248 (2002)

\bibitem{MRS:KMM}
Kozachenko, Y., Melnikov, A., Mishura, Y.: On drift parameter estimation in
  models with fractional {B}rownian motion.
\newblock Statistics \textbf{49}(1), 35--62 (2015)

\bibitem{MRS:kubmish}
{Kubilius}, K., {Mishura}, Y.: {The rate of convergence of Hurst index estimate
  for the stochastic differential equation.}
\newblock {Stochastic Processes Appl.} \textbf{122}(11), 3718--3739 (2012)

\bibitem{MRS:kumirase}
Kubilius, K., Mishura, Y., Ralchenko, K., Seleznjev, O.: Consistency of the
  drift parameter estimator for the discretized fractional
  {O}rnstein--{U}hlenbeck process with {H}urst index {$H\in(0,\frac{1}{2})$}.
\newblock Electron. J. Stat. \textbf{9}(2), 1799--1825 (2015)

\bibitem{MRS:KMR-book}
Kubilius, K.e., Mishura, Y., Ralchenko, K.: Parameter estimation in fractional
  diffusion models, \emph{Bocconi \& Springer Series}, vol.~8.
\newblock Bocconi University Press, [Milan]; Springer, Cham (2017)

\bibitem{MRS:kumiva}
Kukush, A., Mishura, Y., Valkeila, E.: Statistical inference with fractional
  {B}rownian motion.
\newblock Stat. Inference Stoch. Process. \textbf{8}(1), 71--93 (2005)

\bibitem{MRS:Breton98}
Le~Breton, A.: Filtering and parameter estimation in a simple linear system
  driven by a fractional {B}rownian motion.
\newblock Statist. Probab. Lett. \textbf{38}(3), 263--274 (1998)

\bibitem{MRS:mishura}
Mishura, Y.: Stochastic calculus for fractional {Brownian} motion and related
  processes, vol. 1929.
\newblock Springer Science \& Business Media (2008)

\bibitem{MRS:MiSumfbms1}
Mishura, Y.: Maximum likelihood drift estimation for the mixing of two
  fractional {B}rownian motions.
\newblock In: Stochastic and Infinite Dimensional Analysis, pp. 263--280.
  Springer (2016)

\bibitem{MRS:mira}
Mishura, Y., Ralchenko, K.: On drift parameter estimation in models with
  fractional {B}rownian motion by discrete observations.
\newblock Austrian Journal of Statistics \textbf{43}(3), 218--228 (2014)

\bibitem{MRS:Mishura2017}
Mishura, Y., Ralchenko, K.: Drift parameter estimation in the models involving
  fractional brownian motion.
\newblock In: V.~Panov (ed.) Modern Problems of Stochastic Analysis and
  Statistics: Selected Contributions In Honor of Valentin Konakov, pp.
  237--268. Springer International Publishing, Cham (2017)

\bibitem{MRS:mirasesh}
Mishura, Y., Ralchenko, K., Seleznev, O., Shevchenko, G.: Asymptotic properties
  of drift parameter estimator based on discrete observations of stochastic
  differential equation driven by fractional {B}rownian motion.
\newblock In: Modern stochastics and applications, \emph{Springer Optim.
  Appl.}, vol.~90, pp. 303--318. Springer, Cham (2014)

\bibitem{MRS:AJS}
Mishura, Y., Ralchenko, K., Shklyar, S.: Maximum likelihood drift estimation
  for {G}aussian process with stationary increments.
\newblock Austrian J. Statist. \textbf{46}(3-4), 67--78 (2017)

\bibitem{MRS:NA}
Mishura, Y., Ralchenko, K., Shklyar, S.: Maximum likelihood drift estimation
  for {G}aussian process with stationary increments.
\newblock Nonlinear Anal. Model. Control \textbf{23}(1), 120--140 (2018)

\bibitem{MRS:MishuraVoronov}
Mishura, Y., Voronov, I.: Construction of maximum likelihood estimator in the
  mixed fractional--fractional {B}rownian motion model with double long-range
  dependence.
\newblock Mod. Stoch. Theory Appl. \textbf{2}(2), 147--164 (2015)

\bibitem{MRS:nvv}
Norros, I., Valkeila, E., Virtamo, J.: An elementary approach to a {G}irsanov
  formula and other analytical results on fractional {B}rownian motions.
\newblock Bernoulli \textbf{5}(4), 571--587 (1999)

\bibitem{MRS:peltier1995}
Peltier, R.F., L{\'e}vy~V{\'e}hel, J.: Multifractional {B}rownian motion:
  definition and preliminary results.
\newblock INRIA research report, vol.~2645  (1995)

\bibitem{MRS:Polyanin_Manzhirov}
Polyanin, A., Manzhirov, A.: Handbook of integral equations, second edn.
\newblock Chapman \& Hall/CRC, Boca Raton, FL (2008)

\bibitem{MRS:prakasa_rao2010}
{Prakasa Rao}, B.L.S.: {Statistical inference for fractional diffusion
  processes.}
\newblock Chichester: John Wiley \& Sons (2010)

\bibitem{MRS:Privault}
Privault, N., R\'eveillac, A.: Stein estimation for the drift of {G}aussian
  processes using the {M}alliavin calculus.
\newblock Ann. Statist. \textbf{36}(5), 2531--2550 (2008)

\bibitem{MRS:ralshev}
Ralchenko, K.V., Shevchenko, G.M.: Paths properties of multifractal {B}rownian
  motion.
\newblock Theory Probab. Math. Statist. \textbf{80}, 119--130 (2010)

\bibitem{MRS:samko1993fractional}
Samko, S., Kilbas, A., Marichev, O.: Fractional Integrals and Derivatives.
\newblock Taylor \& Francis (1993)

\bibitem{MRS:samuelson1965rational}
Samuelson, P.A.: Rational theory of warrant pricing.
\newblock Industrial Management Review \textbf{6}(2), 13--32 (1965)

\bibitem{MRS:ShenYan}
Shen, G., Yan, L.: Estimators for the drift of subfractional {B}rownian motion.
\newblock Comm. Statist. Theory Methods \textbf{43}(8), 1601--1612 (2014)

\bibitem{MRS:Tanaka13}
Tanaka, K.: Distributions of the maximum likelihood and minimum contrast
  estimators associated with the fractional {O}rnstein-{U}hlenbeck process.
\newblock Stat. Inference Stoch. Process \textbf{16}, 173--192 (2013)

\bibitem{MRS:Tanaka15}
Tanaka, K.: Maximum likelihood estimation for the non-ergodic fractional
  {O}rnstein--{U}hlenbeck process.
\newblock {Stat. Inference Stoch. Process.} \textbf{18}(3), 315--332 (2015)

\bibitem{MRS:Tudor07}
Tudor, C.: Some properties of the sub-fractional {B}rownian motion.
\newblock Stochastics \textbf{79}(5), 431--448 (2007)

\bibitem{MRS:TudorViens}
Tudor, C.A., Viens, F.G.: Statistical aspects of the fractional stochastic
  calculus.
\newblock The Annals of Statistics \textbf{35}(3), 1183--1212 (2007)

\bibitem{MRS:xiaoZX}
Xiao, W., Zhang, W., Xu, W.: Parameter estimation for fractional
  {O}rnstein–{U}hlenbeck processes at discrete observation.
\newblock Applied Mathematical Modelling \textbf{35}, 4196--4207 (2011)

\bibitem{MRS:xiaoZZ}
Xiao, W.L., Zhang, W.G., Zhang, X.L.: Maximum-likelihood estimators in the
  mixed fractional {B}rownian motion.
\newblock Statistics \textbf{45}, 73--85 (2011)

\bibitem{MRS:Zabreyko}
Zabreyko, P.P., Koshelev, A.I., Krasnosel'skii, M.A., Mikhlin, S.G.,
  Rakovshchik, L.S., Stet'senko, V.Y.: Integral equations: {A} reference text.
\newblock Noordhoff, Leyden (1975)

\end{thebibliography}

\end{document}